\theoremstyle{plain}
\newtheorem{thm}{\protect\theoremname}
  \theoremstyle{plain}
  \newtheorem{lem}[thm]{\protect\lemmaname}
  \theoremstyle{plain}
  \newtheorem{prop}[thm]{\protect\propositionname}
  \theoremstyle{definition}
  \newtheorem{problem}[thm]{\protect\problemname}
\date{\today}
  \providecommand{\lemmaname}{Lemma}
  \providecommand{\problemname}{Problem}
  \providecommand{\propositionname}{Proposition}
\providecommand{\theoremname}{Theorem}
\begin{document}

\title{Polynomials with rational generating functions and real zeros}

\author{Tamás Forgács\\
 Khang Tran}
\begin{abstract}
This paper investigates the location of the zeros of a sequence of
polynomials generated by a rational function with a binomial-type
denominator. We show that every member of a two-parameter family consisting
of such generating functions gives rise to a sequence of polynomials
$\{P_{m}(z)\}_{m=0}^{\infty}$ that is eventually hyperbolic. Moreover,
the real zeros of the polynomials $P_{m}(z)$ form a dense subset
of an interval $I\subset\mathbb{R}^{+}$, whose length depends on
the particular values of the parameters in the generating function.\\
 \foreignlanguage{english}{\textbf{MSC:} 30C15, 26C10, 11C08 }
\end{abstract}

\maketitle

\section{Introduction}

The study of the location of zeros of polynomials is one of the oldest
endeavors in mathematics. The prolific mathematical production of
the nineteenth century included a number of advances in this endeavor.
The unsolvability of the general quintic equation together with the
fundamental theorem of algebra led to the consideration that when
it comes to extracting information about the zeros of a complex polynomial
from its coefficients, one should perhaps strive to determine subsets
of $\mathbb{C}$ where the zeros must lie\footnote{Although the publication of \foreignlanguage{english}{\textit{La Géometrie}
predates the early XIXth century by almost two hundred years, from
this perspective, Descartes' rule of signs should be mentioned, as
it gives information on the number of real positive, real negative,
and non-real zeros of a real polynomial}}, rather than looking for the exact location of the zeros. The development
of the Cauchy theory for analytic functions provided some of the classic
machinery suitable for such investigations, including Rouché's theorem,
the argument principle and the Routh-Hurwitz condition for left-half
plane stability. We mention these results not only because they are
powerful tools, but also because they embody a fundamental dichotomy.
Explicit criteria for the location of the zeros of a polynomial in
terms of its coefficients may severely restrict the domain to which
they apply, whereas ubiquitous applicability of a theorem to various
domains may render the result difficult to use. \\
 \indent The Gauss-Lucas theorem, relating the location of the zeros
of $p'(z)$ to those of the polynomial $p(z)$, pioneered a new approach
to an old question: instead of studying the zeros of a function, one
can study the behavior of the zero set of a function under certain
operators. In this light, given the Taylor expansion of a (real) entire
function ${\displaystyle {f(x)=\sum_{k=0}^{\infty}\frac{\gamma_{k}}{k!}x^{k}}}$,
one can interpret $f(x)$ as the result of the sequence $\left\{ \gamma_{k}\right\} _{k=0}^{\infty}$
acting on the function $e^{x}$ by forming a Hadamard product\footnote{We direct the reader to the beautiful works of Hardy \cite{hardy}
and Ostrovskii \cite{ostr} concerning the zero loci of certain entire
functions obtained this way}. Thus, complex sequences have a dual nature: they are coefficients
of `polynomials of infinite degree' (á la Euler), as well as linear
operators on $\mathbb{C}[x]$. G. Pólya and J. Schur's 1914 paper
\cite{ps} was a major mile stone in understanding how sequences (as
linear operators) affect the location of the zeros of polynomials.
More precisely, Pólya and Schur gave a classification of real sequences
that preserve reality of zeros of real polynomials, and initiated
a research program on stability preserving linear operators on circular
domains, which was recently completed by J. Borcea and P. Brändén
(\cite{BB}).\\
 \indent Since the work of Pólya and Schur, the study of sequences
as linear operators on $\mathbb{R}[x]$ has attracted a great amount
of attention. We remark here only that real sequences, when looked
at as operators on $\mathbb{R}[x]$, admit a representation as a formal
power series 
\[
\left\{ \gamma_{k}\right\} _{k=0}^{\infty}\sim\sum_{k=0}^{\infty}Q_{k}(x)D^{k},
\]
where $D$ denotes differentiation, and the $Q_{k}(x)$s are polynomials
with degree $k$ or less (see for example \cite{peetre}). In \cite{tomandrzej}
the first author and A. Piotrowski study the extent to which the `generated'
sequence $\left\{ Q_{k}(x)\right\} _{k=0}^{\infty}$ encodes the reality
preserving properties of the sequence $\{\gamma_{k}\}_{k=0}^{\infty}$,
and find that if this latter sequence is a Hermite-diagonal\footnote{By $\Gamma=\{\gamma_{k}\}_{k=0}^{\infty}$ being a Hermite diagonal
operator we simply mean that $\Gamma[H_{n}(x)]=\gamma_{n}H_{n}(x)$
for all $n$, where $H_{n}(x)$ denotes the $n$th Hermite polynomial. } reality preserving operator, then all of the $Q_{k}(x)$s must have
only real zeros. \\
 \indent The present paper extends the works of S. Beraha, J. Kahane,
and N.~J.~Weiss (\cite{bkw}), A. Sokal (\cite{sokal}) and K. Tran
(\cite{tran}, \cite{tran-1}), by studying a large family of generating
functions which give rise to sequences of polynomials with only real
zeros. Our main result (see Theorem \ref{maintheorem}) concerns a
sequence of polynomials, whose generating function is rational with
a binomial-type denominator. 
\begin{thm}
\label{maintheorem} Let $n,r\in\mathbb{N}$ such that $\max\{r,n\}>1$,
and set $D_{n,r}(t,z):=(1-t)^{n}+zt^{r}$. For all large $m$, the
zeros of the polynomial $P_{m}(z)$ generated by the relation 
\begin{equation}
\sum_{m=0}^{\infty}P_{m}(z)t^{m}=\frac{1}{D_{n,r}(t,z)}\label{genfunc}
\end{equation}
lie on the interval 
\[
I=\left\{
\begin{array}{cc} (0,\infty) & \mbox{ if }n,r\ge2\\
(0,n^{n}/(n-1)^{n-1}) & \text{if} r=1\\
((r-1)^{r-1}/r^{r},\infty) & \text{if} n=1
\end{array}\right.
\]
Furthermore, if $\mathcal{Z}(P_{m})$ denotes the set of zeros of
the polynomial $P_{m}(z)$, then $\bigcup_{m\gg1}\mathcal{Z}(P_{m})$
is dense in $I$. 
\end{thm}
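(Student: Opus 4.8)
The plan is to transfer the question about the $z$-zeros of $P_m$ into a question about the $t$-roots of the denominator $D_{n,r}(t,z)$, which is a polynomial in $t$ of degree $d=\max\{n,r\}$. Fixing a generic $z$ and letting $t_1(z),\dots,t_d(z)$ denote the roots of $D_{n,r}(\cdot,z)$, a partial-fraction expansion of $1/D_{n,r}(t,z)$ in $t$ gives the exact formula
\[
P_m(z)=-\sum_{j=1}^{d}\frac{t_j(z)^{-(m+1)}}{\partial_t D_{n,r}(t_j(z),z)}.
\]
For large $m$ this sum is dominated by the roots $t_j(z)$ of smallest modulus, so---following the circle of ideas of Beraha--Kahane--Weiss \cite{bkw}, Sokal \cite{sokal}, and Tran \cite{tran,tran-1}---the accumulation set of the zeros of $P_m$ is governed by the locus where the two smallest values among $|t_1(z)|,\dots,|t_d(z)|$ coincide. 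The heart of the proof is therefore to show that, as $z$ ranges over the stated interval $I$, the denominator has exactly two roots of minimal modulus, that they form a complex-conjugate pair $t(z)=\rho(z)e^{\pm i\theta(z)}$, and that every remaining root is strictly larger in modulus.

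To locate this locus I would study the branch relation $z=-(1-t)^n/t^r$. For real $z$ the root set is closed under conjugation, so non-real roots occur in conjugate pairs, which is exactly the configuration wanted for the dominant pair; the interior of $I$ should be the set of real $z$ for which the minimal modulus is attained by a genuinely non-real pair. Candidate endpoints are the values of $z$ at which such a pair degenerates into a real double root, obtained from the simultaneous equations $D_{n,r}=0$ and $\partial_t D_{n,r}=0$; eliminating $z$ gives the critical root $t=r/(r-n)$. Substituting back, when $n=1$ this double root sits at $t=r/(r-1)>0$ (so $\theta\to0$) and forces $z=(r-1)^{r-1}/r^r$, while when $r=1$ it sits at $t=-1/(n-1)<0$ (so $\theta\to\pi$) and forces $z=n^n/(n-1)^{n-1}$; these are precisely the finite endpoints of $I$. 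When $n,r\ge2$ the critical value is either negative or corresponds to a collision among the large-modulus (non-dominant) roots, so the dominant conjugate pair persists for every $z>0$ and the degeneration is pushed to $z\to0^+$ and $z\to\infty$, giving $I=(0,\infty)$. A boundary-behaviour computation, perturbing the roots of the two dominant terms of $D_{n,r}$, shows in each case that $\theta(z)$ sweeps an interval of length exactly $\pi/r$, matching the degree $\deg P_m=\lfloor m/r\rfloor$.

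Granting the two-conjugate-dominant-roots structure, the residues attached to $t(z)$ and $\overline{t(z)}$ are themselves conjugate, so writing them as $|c(z)|e^{\pm i\phi(z)}$ yields the uniform asymptotic
\[
P_m(z)=\frac{2|c(z)|}{\rho(z)^{m+1}}\cos\big((m+1)\theta(z)-\phi(z)\big)\big(1+o(1)\big)
\]
on compact subsets of the interior of $I$. Since $\theta$ is continuous and strictly monotone with total variation $\pi/r$, the argument $(m+1)\theta(z)-\phi(z)$ increases by about $(m+1)\pi/r$ across $I$, so the cosine changes sign roughly $(m+1)/r$ times; the intermediate value theorem then produces that many distinct real zeros of $P_m$ inside $I$. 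Because $P_m$ has real coefficients and degree $\lfloor m/r\rfloor\sim(m+1)/r$, this sign-change count exhausts the degree, forcing every zero of $P_m$ to be real and to lie in $I$; the same construction places a zero in each subinterval once $m$ is large, which gives density of $\bigcup_{m\gg1}\mathcal{Z}(P_m)$ in $I$. To close the argument I would separately verify that no real $z$ outside $\overline{I}$ can be a zero, which is immediate since there the minimal-modulus root is unique and real, so the leading term of the asymptotic does not vanish.

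I expect the main obstacle to be the global control of the root configuration in the second step: proving that exactly two roots---necessarily conjugate---remain of minimal modulus for every $z$ in the open interval, with no third root ever tying them in modulus, and that $\theta(z)$ is monotone with the correct total variation $\pi/r$. This is a statement about a one-parameter family of algebraic curves and seems to require either a careful argument-principle (winding number) count along the circle $|t|=\rho(z)$ or a monotonicity analysis of the branch map $z=-(1-t)^n/t^r$, together with a uniform treatment of the error term near the endpoints, where the dominant roots degenerate and the asymptotic above must be handled with extra care.
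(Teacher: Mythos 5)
Your outline coincides with the strategy the paper actually follows: partial fractions in $t$, identification of a complex-conjugate pair of minimal-modulus roots of $D_{n,r}(\cdot,z)$, a cosine-type asymptotic for $P_m$, and an intermediate-value sign count that is then played against the degree bound $\deg P_m\le\lfloor m/r\rfloor$. (The paper parametrizes by the argument $\theta$ of the dominant root rather than by $z$, via the increasing map $z(\theta)$ of Lemma \ref{lem:zfunctheta}, but that is the same idea read in the opposite direction; your endpoint computation from the double root $t=r/(r-n)$ recovers exactly the endpoints $n^{n}/(n-1)^{n-1}$ and $(r-1)^{r-1}/r^{r}$ obtained there.)

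However, the two steps you flag as ``obstacles'' are not loose ends to be tidied up --- they are essentially all of the mathematical content, and as written the argument fails without them. First, the minimal-modulus claim (the paper's Proposition \ref{lem:zerosQ}: after normalizing by the dominant root, $Q(\zeta)$ has exactly two zeros on the unit circle and every other zero strictly outside the closed unit disk) is proved there by the substitution $\zeta\mapsto\zeta^{n}$, a factorization into the polynomials $R(\zeta)=\sin(\phi-\theta)-\zeta^{n}\sin\phi-\omega\zeta^{r}\sin\theta$, a circle-intersection computation, and a winding-number/congruence argument; you name two candidate methods but carry out neither, and every subsequent statement depends on this. Second, and more fatal to the logic: your asymptotic for $P_m$ is claimed (and is only true) uniformly on compact subsets of the interior of $I$, yet the theorem requires at least $\lfloor m/r\rfloor$ sign changes, i.e.\ the full count up to both endpoints. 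Finding ``roughly $(m+1)/r$'' zeros, or all but an $\epsilon$-fraction of them, shows only that most zeros of $P_m$ are real, not that all are, so the step ``this sign-change count exhausts the degree'' collapses precisely where the asymptotic is not uniform. Near $\theta\to0$ (equivalently $z$ near the left endpoint) with $n\ge2$, $n$ of the normalized roots cluster at $1$: there is no modulus gap, the terms they contribute to $R_m$ at the test points $\theta_h=h\pi/(m+r)$ with $h=O(\sqrt m)$ are of the same order as your ``main'' cosine term, and the sign is governed by a full exponential sum over $n$-th roots of $-1$. This is exactly why the paper's Proposition \ref{prop:signchangegeneral} splits into three regimes, and why its Cases 2 and 3 require the analysis of Lemma \ref{aestimates} (an alternating-series estimate $|a_{h-1}|+|a_{h+1}|<|a_h|$ together with a finite computer verification for $n\le85$). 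A smaller but real issue of the same kind: your partial-fraction formula assumes simple roots, whereas for certain $n,r$ a double root does occur at one $\theta^{*}$ (Lemma \ref{lem:distinctzeros}), and one must check, as the paper does, that the resulting singularity of $R_m$ is removable. Until these points are supplied, the proposal is a correct roadmap of the paper's proof rather than a proof.
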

Although this result is asymptotic in nature, we do believe that in
fact all of the generated polynomials have only real zeros. Given
that we have no proof of this claim at this time, we pose this stronger
statement as an open problem (see Problem \ref{prob:1} in Section
\ref{open}).\\
 \indent We close this introduction by noting that if $A(z)$ and
$B(z)$ are any two (non-zero) polynomials with complex coefficients,
then setting $n=1$ and replacing $z$ by $(-1)^{r}A(z)/B(z)^{r}$
and $t$ by $-B(z)t$ in Theorem \ref{maintheorem} reproduces the
main result in a recent paper by the second author: 
\begin{thm}[Theorem 1,\,p.\,879 in \cite{tran-1}]
Let $P_{m}(z)$ be a sequence of polynomials whose generating function
is 
\[
\sum_{m=0}^{\infty}P_{m}(z)t^{m}=\frac{1}{1+B(z)t+A(z)t^{r}},
\]
where $A(z)$ and $B(z)$ are polynomials in $z$ with complex coefficients.
There is a constant $C=C(r)$ such that for all $m>C$, the roots
of $P_{m}(z)$ which satisfy $A(z)\neq0$ lie on a fixed curve given
by 
\[
\Im\left(\frac{B^{r}(z)}{A(z)}\right)=0\qquad\mbox{and}\qquad0\le(-1)^{r}\frac{B^{r}(z)}{A(z)}\le\frac{r^{r}}{(r-1)^{r-1}}
\]
and are dense there as $m\to\infty$. 
\end{thm}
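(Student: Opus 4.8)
The plan is to obtain this statement as a direct corollary of Theorem \ref{maintheorem} through the change of variables advertised in the paragraph preceding it. Write $\{Q_m\}_{m=0}^{\infty}$ for the polynomials produced by Theorem \ref{maintheorem} in the case $n=1$, so that
\[
\sum_{m=0}^{\infty}Q_m(w)\,t^m=\frac{1}{(1-t)+w\,t^r}=\frac{1}{1-t+w\,t^r},
\]
and note that since $n=1$ the hypothesis $\max\{r,n\}>1$ forces $r\ge 2$, which is exactly the range in which $r^r/(r-1)^{r-1}$ is meaningful. First I would substitute $w=(-1)^rA(z)/B(z)^r$ and $t\mapsto -B(z)\,t$ in this identity, working at a fixed $z$ with $B(z)\ne 0$ so that $t\mapsto -B(z)t$ is an invertible linear rescaling. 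A one-line computation, using $(-B(z)t)^r=(-1)^rB(z)^rt^r$, gives $1-(-B(z)t)+(-1)^r\frac{A(z)}{B(z)^r}(-B(z)t)^r=1+B(z)t+A(z)t^r$, so the substituted generating function is exactly $1/(1+B(z)t+A(z)t^r)=\sum_m P_m(z)t^m$.

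Comparing the coefficient of $t^m$ on the two sides then yields the key identity
\[
P_m(z)=\bigl(-B(z)\bigr)^{m}\,Q_m\!\left(\frac{(-1)^rA(z)}{B(z)^{r}}\right).
\]
I would next verify that the right-hand side is genuinely a polynomial in $z$: expanding $1/(1-t+wt^r)$ shows $\deg_w Q_m=\lfloor m/r\rfloor$, so each monomial $w^j$ in $Q_m$ satisfies $j\le m/r$; after the substitution $w^j$ becomes $(\pm)A(z)^jB(z)^{-rj}$, and the prefactor $(-B(z))^m$ turns this into $(\pm)A(z)^jB(z)^{m-rj}$ with $m-rj\ge 0$. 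No negative power of $B(z)$ survives, so the displayed relation is an identity of polynomials valid for every $z$, including where $B(z)=0$ by continuity.

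The heart of the matter is the transfer of the zero locus. Fix $m>C(r)$, where $C(r)$ is the threshold from Theorem \ref{maintheorem} in the case $n=1$, which depends only on $r$; by that theorem every zero of $Q_m$ is real and lies in $I_{n=1}=((r-1)^{r-1}/r^r,\infty)$. Let $z_0$ be a root of $P_m$ with $A(z_0)\ne 0$. If $B(z_0)\ne 0$, then $(-B(z_0))^m\ne 0$, and the identity forces $w_0:=(-1)^rA(z_0)/B(z_0)^r$ to be a zero of $Q_m$; hence $w_0$ is real, i.e. $\Im\bigl(B^r(z_0)/A(z_0)\bigr)=0$, and $w_0\in((r-1)^{r-1}/r^r,\infty)$. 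Using $(-1)^rB^r/A=1/w_0$ (since $w_0\cdot(-1)^rB^r/A=(-1)^{2r}=1$), the interval condition becomes precisely $0<(-1)^rB^r(z_0)/A(z_0)<r^r/(r-1)^{r-1}$, so $z_0$ lies on the stated curve. If instead $B(z_0)=0$ with $A(z_0)\ne 0$, then $B^r/A=0$ there, which satisfies $\Im(B^r/A)=0$ and sits at the closed left endpoint $0$, so $z_0$ again lies on the curve. This settles the location claim.

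It remains to prove density, which I expect to be the only substantive obstacle. Theorem \ref{maintheorem} guarantees $\bigcup_{m\gg 1}\mathcal{Z}(Q_m)$ is dense in $I_{n=1}$, and under the reciprocal correspondence $w\mapsto 1/w=(-1)^rB^r/A$ this dense set maps to a dense subset of $(0,r^r/(r-1)^{r-1})$, the upper endpoint appearing only as a limit, consistent with the closed inequality in the statement. The map $\varphi(z)=(-1)^rA(z)/B(z)^r$ is rational, hence continuous and, off its finite set of critical points and poles, locally invertible with continuous inverse. Given a point $z^{\ast}$ on the curve at which $\varphi$ is regular, I would pull the accumulating real zeros $w_0$ of the $Q_m$ near $\varphi(z^{\ast})$ back through a local inverse of $\varphi$ to produce roots $z_0$ of the corresponding $P_m$ accumulating at $z^{\ast}$, giving density away from finitely many exceptional points and hence everywhere on the curve. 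The genuinely delicate points are exactly the branch/critical points of $\varphi$, where local invertibility fails, together with the roots having $B(z_0)=0$, which accumulate at the endpoint $B^r/A=0$; both are absorbed by the observation that the exceptional set is finite and therefore irrelevant to a density conclusion.
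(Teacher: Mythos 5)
Your proposal is correct and takes essentially the same approach as the paper: the paper obtains this statement precisely by setting $n=1$ and substituting $z\mapsto(-1)^{r}A(z)/B(z)^{r}$, $t\mapsto-B(z)t$ in Theorem \ref{maintheorem}, which is exactly the reduction you carry out. Your identity $P_{m}(z)=\left(-B(z)\right)^{m}Q_{m}\!\left((-1)^{r}A(z)/B(z)^{r}\right)$, the handling of the $B(z_{0})=0$ endpoint, and the pullback density argument simply supply the routine details that the paper leaves to the reader in its one-sentence remark.
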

The rest of the paper is organized as follows. In Section \ref{preliminaries}
we present all the preliminary results needed for the proof of Theorem
\ref{maintheorem}. Along with a number of technical lemmas, we prove
a key proposition (Proposition \ref{lem:zerosQ}) concerning estimates
for the relative magnitudes of the zeros (in $t$) of $D(t,z)$. Section
\ref{proofmainthm} is dedicated to the proof of Theorem \ref{maintheorem}.
Before presenting the proof, we illustrate the techniques to be employed
by proving a slightly stronger result for small $n$ and $r$ (Proposition
\ref{illustration}). The paper concludes with Section \ref{open},
where we list some open problems related to our investigations.

\section{Preliminaries}

\label{preliminaries} We establish Theorem \ref{maintheorem} by
showing that for large $m$, the polynomials $P_{m}(z)$ have at least
as many zeros on $I$ as their degree, and since $I\subset\mathbb{R}$,
all zeros of $P_{m}(z)$ must be real when $m\gg1$.Thus, we lose
no generality (in retrospect) by assuming that $z\in\mathbb{R}$,
which we shall do for the remainder of the paper. We start our investigations
with the following 
\begin{lem}
\label{degree} Suppose that the sequence of polynomials ${\displaystyle {\left\{ P_{m}(z)\right\} _{m=0}^{\infty}}}$
is generated by (\ref{genfunc}). Then $\deg P_{m}(z) \leq \lfloor m/r\rfloor$
for all $m$. \end{lem}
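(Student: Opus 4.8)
The plan is to expand the right-hand side of (\ref{genfunc}) as a formal power series in $t$ with coefficients in $\mathbb{R}[z]$, and then simply read off the degree in $z$ of the coefficient of $t^{m}$. First I would factor out the dominant part of the denominator, writing
\[
\frac{1}{D_{n,r}(t,z)} = \frac{1}{(1-t)^{n}}\cdot\frac{1}{1 + z\,t^{r}/(1-t)^{n}}.
\]
Since $r\ge 1$, the quantity $u:=z\,t^{r}/(1-t)^{n}$ has order $r$ in $t$, so the geometric series $\sum_{k\ge 0}(-1)^{k}u^{k}$ converges in the ring of formal power series $\mathbb{R}[z][[t]]$, and substituting yields
\[
\frac{1}{D_{n,r}(t,z)} = \sum_{k=0}^{\infty} (-1)^{k} z^{k}\,\frac{t^{rk}}{(1-t)^{n(k+1)}}.
\]

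Next I would expand each factor $(1-t)^{-n(k+1)}$ by the negative binomial theorem, $(1-t)^{-N}=\sum_{j\ge 0}{N+j-1 \choose j}t^{j}$, and collect the coefficient of $t^{m}$. Because the only source of $z^{k}$ is the $k$-th term of the geometric series, and that term is divisible by $t^{rk}$, the monomial $z^{k}$ can contribute to $P_{m}(z)$ only when $rk\le m$. Reading off the coefficient of $t^{m}$ then gives the closed form
\[
P_{m}(z) = \sum_{0\le k\le m/r} (-1)^{k}{n(k+1)+m-rk-1 \choose m-rk}\,z^{k},
\]
from which $\deg P_{m}(z)\le\lfloor m/r\rfloor$ is immediate.

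Beyond this bookkeeping there is no real obstacle; the single point deserving a sentence of justification is the legitimacy of the geometric expansion, which rests on $u$ having strictly positive order in $t$ (guaranteed by $r\ge 1$), so that only finitely many terms of the series contribute to each coefficient of $t^{m}$. I would also remark that the binomial coefficient above with $k=\lfloor m/r\rfloor$ is a positive integer, so that equality in fact holds in the lemma; since only the upper bound is needed in the sequel, however, this sharper observation can be left aside.
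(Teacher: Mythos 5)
Your proof is correct, but it takes a genuinely different route from the paper's. The paper's argument is recursive: multiplying (\ref{genfunc}) through by $D_{n,r}(t,z)$ and equating coefficients of $t^{m}$ yields the recurrence $((1-\Delta)^{n}+z\Delta^{r})P_{m}(z)=0$, where $\Delta P_{m}=P_{m-1}$ and $\Delta P_{0}=0$; the degree bound then follows by induction on $m$, since the only term that raises the $z$-degree is $z\Delta^{r}$, and it does so at the cost of lowering the index by $r$. You instead expand the generating function directly: the geometric series in $zt^{r}/(1-t)^{n}$ (valid $t$-adically for exactly the reason you flag, namely that $r\ge1$ gives this ratio strictly positive order in $t$, so each coefficient of $t^{m}$ receives only finitely many contributions), followed by the negative binomial theorem, produces the closed form
\[
P_{m}(z)=\sum_{0\le k\le m/r}(-1)^{k}{n(k+1)+m-rk-1 \choose m-rk}\,z^{k},
\]
which is correct and makes the bound immediate. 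Your route buys strictly more information: the explicit coefficients, the fact that the degree is exactly $\lfloor m/r\rfloor$ (the top binomial coefficient is positive since $n(k+1)\ge1$), and the alternating sign pattern of the coefficients, which already shows that any real zero of $P_{m}$ must be positive --- consistent with the interval $I$ in Theorem \ref{maintheorem} lying in $\mathbb{R}^{+}$. The paper's route is shorter and produces the recurrence (\ref{eq:recurrence}) as a byproduct, a point of view the authors return to in Section \ref{open} when discussing how the denominator of a rational generating function governs the recurrence while the numerator encodes initial data.
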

\begin{proof}
Rearranging (\ref{genfunc}) yields the equation 
\[
((1-t)^{n}+zt^{r})\sum_{m=0}^{\infty}P_{m}(z)t^{m}=1.
\]
By equating coefficients we see that the polynomial $P_{m}(z)$ satisfies
the recurrence 
\begin{equation}
((1-\Delta)^{n}+z\Delta^{r})P_{m}(z)=0,\label{eq:recurrence}
\end{equation}
where the operator $\Delta$ is defined by $\Delta P_{m}:=P_{m-1}$,
and $\Delta P_{0}=0$. The claim follows. 
\end{proof}
Since we are interested in various $z\in\mathbb{R}$ as potential
zeros of $P_{m}(z)$, we seek to understand how $z$ and the zeros
of $D_{n,r}(t,z)$ (when seen as a polynomial in $t$) correlate.
The next result sheds some light on this question. 
\begin{lem}
Let $n,r\in\mathbb{N}$ be such that $\max\{n,r\}>1$. Suppose $z\in\mathbb{R}$,
and that $t=|t|e^{-i\theta}$ is a zero of $D_{n,r}(t,z)$. Then the
equation 
\begin{equation}
z=\frac{\sin^{n}\theta}{\sin^{n-r}(\phi-\theta)\sin^{r}\phi}\label{eq:ztheta}
\end{equation}
holds, where ${\displaystyle {\phi=\frac{(n-1)\pi+r\theta}{n}}}$. \end{lem}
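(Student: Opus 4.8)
The plan is to solve the zero condition for $z$ and then read off the stated identity by decomposing the relation $1=t+(1-t)$ into its real and imaginary parts. First, since $t=0$ forces $D_{n,r}=1$, every zero satisfies $t\neq 0$, so $D_{n,r}(t,z)=0$ may be rewritten as $z=-(1-t)^n/t^r$. Because $z$ is assumed real, the quantity $(1-t)^n/t^r$ must be real, i.e.\ its argument is an integer multiple of $\pi$. Writing $\arg t=-\theta$ (as dictated by $t=|t|e^{-i\theta}$), this says $n\arg(1-t)+r\theta\equiv 0\pmod\pi$. I would then introduce $\phi$ through the relation $\arg(1-t)=\pi-\phi$; the congruence becomes $n\phi\equiv r\theta\pmod\pi$, and selecting the branch $n\phi=(n-1)\pi+r\theta$ produces exactly $\phi=\frac{(n-1)\pi+r\theta}{n}$.

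Next I would recover the two moduli $|t|$ and $|1-t|$. Substituting the polar forms $t=|t|e^{-i\theta}$ and $1-t=|1-t|e^{i(\pi-\phi)}$ into the identity $1=t+(1-t)$ and equating imaginary and real parts yields the linear system $|1-t|\sin\phi=|t|\sin\theta$ and $|t|\cos\theta-|1-t|\cos\phi=1$. Solving it (equivalently, applying the law of sines to the triangle with vertices $0$, $1$, and $t$, whose interior angles are $\theta$, $\pi-\phi$, and $\phi-\theta$) gives
\[
|1-t|=\frac{\sin\theta}{\sin(\phi-\theta)},\qquad |t|=\frac{\sin\phi}{\sin(\phi-\theta)}.
\]

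Finally I would take moduli in $(1-t)^n=-zt^r$ to obtain $|1-t|^n=|z|\,|t|^r$, and substitute the expressions above to reach
\[
|z|=\frac{|1-t|^n}{|t|^r}=\frac{\sin^n\theta}{\sin^{n-r}(\phi-\theta)\,\sin^r\phi}.
\]
It then remains to pin down the sign: computing $\arg z=\pi+n\arg(1-t)+r\theta$ and inserting $\arg(1-t)=\pi-\phi$ together with $n\phi=(n-1)\pi+r\theta$ gives $\arg z\equiv 0\pmod{2\pi}$, so $z>0$ and $z=|z|$, which is the claimed identity.

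The trigonometric solve and the modulus computation are routine; the delicate point I expect to be the main obstacle is the bookkeeping of arguments and branches. One must check that the chosen branch $\arg(1-t)=\pi-\phi$ with $\phi=\frac{(n-1)\pi+r\theta}{n}$ (equivalently $\arg(1-t)=(\pi-r\theta)/n$) is the one compatible with the hypothesis, that the triangle is non-degenerate so that $\theta,\phi,\phi-\theta\in(0,\pi)$ and no sine in the denominator vanishes, and that the sign of $z$ comes out genuinely positive rather than merely $\pm$. These sign and branch verifications, while elementary, are exactly where a careless argument could select the wrong multiple of $\pi$ and land on a different $\phi$.
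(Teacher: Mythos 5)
Your proposal is correct, and it arrives at (\ref{eq:ztheta}) by a genuinely different route from the paper's. The paper never manipulates $\arg(1-t)$ directly: it uses the fact that $D_{n,r}(\cdot,z)\in\mathbb{R}[t]$, so that $\overline{t}=te^{2i\theta}$ is also a zero, divides the two zero conditions to get $\left(\frac{te^{2i\theta}-1}{t-1}\right)^{n}=e^{2ir\theta}$, solves this explicitly for $t$ along the branch $e^{2i\phi}=e^{2\pi i(n-1)/n}e^{2ir\theta/n}$ to obtain $t=\frac{\sin\phi}{\sin(\phi-\theta)}e^{-i\theta}$, and then computes $z=-(1-t)^{n}/t^{r}$ by expanding $(1-t)^{n}$. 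You instead encode reality of $z=-(1-t)^{n}/t^{r}$ as the argument congruence $n\arg(1-t)+r\theta\equiv0\pmod{\pi}$, set $\phi=\pi-\arg(1-t)$ on the branch $n\phi=(n-1)\pi+r\theta$, and recover the moduli from the law of sines in the triangle with vertices $0$, $1$, $t$; taking absolute values in $(1-t)^{n}=-zt^{r}$ and checking $\arg z\equiv0\pmod{2\pi}$ finishes. What each buys: your route is more elementary and geometric, and it makes the positivity of $z$ an explicit verification rather than a silent byproduct; the paper's route produces the closed form (\ref{eq:t0form}) for the zero itself, which is not incidental, since that formula is reused to define $t_{0}$, to derive (\ref{eq:zetatheta}), and in Proposition \ref{lem:zerosQ} --- though your linear system yields the same information, as $|t|=\sin\phi/\sin(\phi-\theta)$ together with $\arg t=-\theta$ is precisely (\ref{eq:t0form}). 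Finally, the branch-selection caveat you flag is real but is shared equally by the paper: its proof likewise singles out ``one of whose solutions,'' and for a zero lying on a different branch the identity would hold with $\phi=(k\pi+r\theta)/n$ for some other $k$; since the lemma is used downstream only as a parametrization of $z$ by the distinguished zero $t_{0}$, neither argument is deficient relative to the statement's intended meaning.
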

\begin{proof}
Suppose $z\in\mathbb{R}$. Then $D_{n,r}(t,z)\in\mathbb{R}[t]$, and
consequently if $t=|t|e^{-i\theta}$, $\theta\in\mathbb{R}$, is a
zero of $D_{n,r}(t,z)$, then so is $|t|e^{i\theta}=te^{2i\theta}$.
Rearranging $D_{n,r}(t,z)=D_{n,r}(te^{2i\theta},z)$ yields the equation
\[
\left(\frac{te^{2i\theta}-1}{t-1}\right)^{n}=e^{2ir\theta},
\]
one of whose solutions is 
\begin{equation}
t=\frac{1-e^{2\pi i(n-1)/n}e^{2ir\theta/n}}{e^{2i\theta}-e^{2\pi i(n-1)/n}e^{2ir\theta/n}}.\label{tell}
\end{equation}
We set ${\displaystyle {\phi=\frac{(n-1)\pi+r\theta}{n}}}$ and note
that $\phi-\theta\neq0$. Hence the substitution ${\displaystyle {e^{2\pi i(n-1)/n}e^{2ir\theta/n}=e^{2i\phi}}}$
in equation (\ref{tell}) gives 
\begin{equation}
t=\frac{1-e^{2i\phi}}{e^{2i\theta}-e^{2i\phi}}=e^{-i\theta}\frac{e^{-i\phi}-e^{i\phi}}{e^{i(\theta-\phi)}-e^{-i(\theta-\phi)}}=\frac{\sin\phi}{\sin(\phi-\theta)}e^{-i\theta},\label{eq:t0form}
\end{equation}
and consequently 
\[
(1-t)^{n}=\frac{(-\cos\phi\sin\theta+i\sin\phi\sin\theta)^{n}}{\sin^{n}(\phi-\theta)}=(-1)^{n}\frac{\sin^{n}\theta e^{-in\phi}}{\sin^{n}(\phi-\theta)}=-\frac{\sin^{n}\theta e^{-ir\theta}}{\sin^{n}(\phi-\theta)}.
\]
Solving $D_{n,r}(t,z)=0$ for $z$ thus gives 
\begin{equation}
z=-\frac{(1-t)^{n}}{t^{r}}=\frac{\sin^{n}\theta}{\sin^{n-r}(\phi-\theta)\sin^{r}\phi}.\label{eqn:zintheta}
\end{equation}
The proof is complete. 
\end{proof}
We note that equation (\ref{eq:ztheta}) defines a smooth curve in
$(0,\pi/r)\times\mathbb{R}^{+}$ which, as we shall see, contains
at least $\lfloor m/r\rfloor$ many points with distinct second coordinates
that are all zeros of $P_{m}(z)$ if $m$ is large. For the graphs
of $z$ as a function of $\theta$ when $n=3$, $r=1$, and $r=2$,
see Figure \ref{fig:zthetafig}. 
\begin{figure}[h]
\includegraphics[scale=0.5]{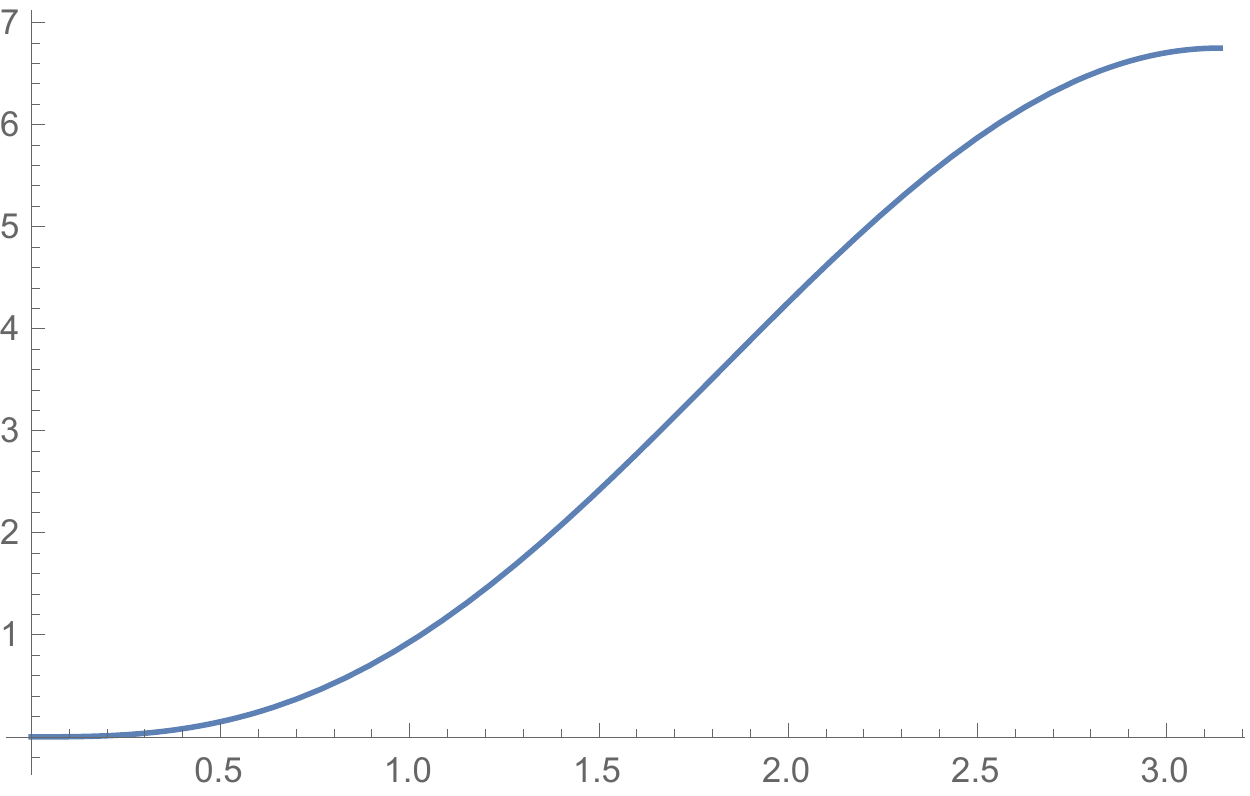}\includegraphics[scale=0.5]{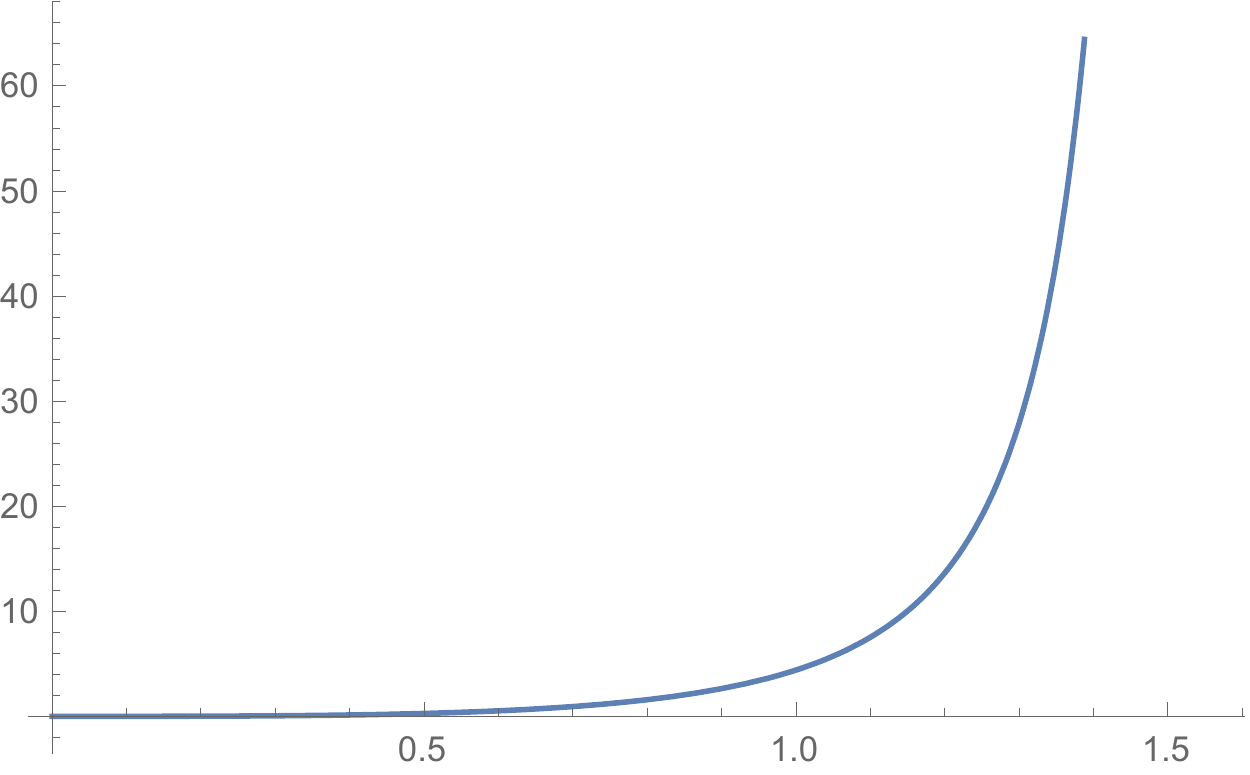}
\caption{\label{fig:zthetafig}The curve $z(\theta)$ when $n=3,r=1$ (left),
and $n=3,r=2$ (right)}
\end{figure}

We next give the proof of two key properties of the curve defined
by (\ref{eq:ztheta}). 
\begin{lem}
\label{lem:zfunctheta} Let $n,r\in\mathbb{N}$ be such that $\max\{n,r\}>1$,
let ${\displaystyle {\phi=\frac{(n-1)\pi+r\theta}{n}}}$, and let
$I$ be the interval as in Theorem \ref{maintheorem}. The function
${\displaystyle {z(\theta)=\frac{\sin^{n}\theta}{\sin^{n-r}(\phi-\theta)\sin^{r}(\phi)}}}$
is increasing on $(0,\pi/r)$, and maps this interval onto $I$.\end{lem}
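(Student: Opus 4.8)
The plan is to treat the two assertions—strict monotonicity and the identification of the image—separately, with the bulk of the effort going into monotonicity. Since $z(\theta)>0$ throughout $(0,\pi/r)$ (the three sine factors in the denominator are readily checked to be nonvanishing there, because $\phi\in((n-1)\pi/n,\pi)$ forces $\sin\phi>0$, and $\phi-\theta$ stays in $(0,\pi)$), it suffices to show $(\log z)'>0$. Using $\phi'=r/n$, logarithmic differentiation gives
\[
n\,\frac{z'(\theta)}{z(\theta)}=n^{2}\cot\theta+(n-r)^{2}\cot(\phi-\theta)-r^{2}\cot\phi,
\]
and I would then clear denominators by multiplying through by the positive quantity $\sin\theta\,\sin(\phi-\theta)\,\sin\phi$, reducing the claim to the positivity of the resulting numerator $N(\theta)$.

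The crux is to show $N(\theta)>0$. Expanding $\sin\phi$ and $\cos\phi$ via the relation $\phi=\theta+(\phi-\theta)$ and collecting terms according to the products of $\sin\theta,\cos\theta,\sin(\phi-\theta),\cos(\phi-\theta)$, the cross term acquires the coefficient $n^{2}+(n-r)^{2}-r^{2}=2n(n-r)$, and I expect everything to assemble into the identity
\[
N(\theta)=\bigl(n\cos\theta\sin(\phi-\theta)+(n-r)\sin\theta\cos(\phi-\theta)\bigr)^{2}+r^{2}\sin^{2}\theta\,\sin^{2}(\phi-\theta).
\]
This is the heart of the argument, and it is worth noting that it relies only on $\phi=\theta+(\phi-\theta)$ together with the coefficient pattern $n^{2},(n-r)^{2},r^{2}$, not on the precise form of $\phi$. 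Once the numerator is recognized as a sum of a square and the manifestly positive term $r^{2}\sin^{2}\theta\,\sin^{2}(\phi-\theta)$, positivity on $(0,\pi/r)$ is immediate, since $\sin\theta>0$, $\sin(\phi-\theta)>0$, and $r\ge1$. Hence $z'(\theta)>0$ and $z$ is strictly increasing.

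Granting strict monotonicity and continuity, the image of $z$ is precisely the open interval bounded by its one-sided limits at $0^{+}$ and $(\pi/r)^{-}$, so it remains to evaluate these. As $\theta\to0^{+}$ we have $\phi\to(n-1)\pi/n$, while as $\theta\to(\pi/r)^{-}$ we have $\phi\to\pi$, so $\sin\phi\to0$ there. When $n\ge2$ the factor $\sin^{n}\theta\to0$ dominates and $z\to0$; when $n=1$ the exponent $n-r$ is negative, so $\sin^{n-r}(\phi-\theta)$ moves to the numerator, and since $\phi=r\theta$ a Taylor expansion as $\theta\to0^{+}$ yields the finite limit $(r-1)^{r-1}/r^{r}$. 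Symmetrically, when $r\ge2$ the factor $\sin^{r}\phi\to0$ forces $z\to\infty$; when $r=1$ both $\sin^{n-1}(\phi-\theta)$ and $\sin\phi$ vanish as $\theta\to\pi^{-}$ at rates that, together with $\sin^{n}\theta\to0$, resolve via Taylor expansion to the finite value $n^{n}/(n-1)^{n-1}$. Matching the limits against the three mutually exclusive regimes $n,r\ge2$, $r=1$, and $n=1$ then reproduces exactly the interval $I$ of Theorem \ref{maintheorem}.

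The only genuinely delicate point is the perfect-square factorization of $N(\theta)$; everything else is bookkeeping, modulo the case distinctions at the endpoints. I would therefore organize the write-up so that the logarithmic-derivative computation feeds directly into this identity, and dispatch the $n=1$ and $r=1$ endpoint degeneracies as short separate computations.
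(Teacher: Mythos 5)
Your proof is correct, and it takes a genuinely different route from the paper's. The paper establishes monotonicity by factoring $z(\theta)$ into powers of the quotients $\sin\theta/\sin(\phi-\theta)$, $\sin\theta/\sin\phi$ and $\sin(\phi-\theta)/\sin\phi$, differentiating each factor, and then running a case analysis ($n=r$, $r<n$, $n<r$, with the subcase $r=1$, $\theta>\pi/2$ requiring ad hoc trigonometric inequalities such as $\sin(\alpha/n)>\tfrac{1}{n}\sin\alpha$). Your logarithmic-derivative computation collapses all of that into a single identity: I verified that with $\phi'=r/n$ one indeed gets $n\,z'/z=n^{2}\cot\theta+(n-r)^{2}\cot(\phi-\theta)-r^{2}\cot\phi$, and that multiplying by $\sin\theta\sin(\phi-\theta)\sin\phi$ and expanding $\sin\phi$, $\cos\phi$ via the addition formulas at $\phi=\theta+(\phi-\theta)$ produces the cross-term coefficient $n^{2}+(n-r)^{2}-r^{2}=2n(n-r)$, whence
\[
N(\theta)=\bigl(n\cos\theta\sin(\phi-\theta)+(n-r)\sin\theta\cos(\phi-\theta)\bigr)^{2}+r^{2}\sin^{2}\theta\sin^{2}(\phi-\theta)>0
\]
on $(0,\pi/r)$, since $r\ge1$ and $\sin\theta$, $\sin(\phi-\theta)$, $\sin\phi$ are all positive there. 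This buys a uniform, case-free proof of strict monotonicity whose only external input is the positivity of the three sines (which you check via the ranges of $\phi$ and $\phi-\theta$); the paper's factor-by-factor route is longer and more elementary, but its intermediate derivative formulas are of the same flavor as computations reused later (compare the proof of Lemma \ref{lem:Atheta}). Your endpoint limits agree with the paper's in all three regimes: $z\to0$ at $0^{+}$ for $n\ge2$ versus $z\to(r-1)^{r-1}/r^{r}$ for $n=1$, and $z\to\infty$ at $(\pi/r)^{-}$ for $r\ge2$ versus $z\to n^{n}/(n-1)^{n-1}$ for $r=1$, so monotonicity plus continuity identifies the image as $I$ exactly as claimed.
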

\begin{proof}
We start by computing three derivatives: 
\begin{eqnarray}
\frac{d}{d\theta}\left(\frac{\sin\theta}{\sin(\phi-\theta)}\right) & = & \frac{\cos\theta\sin(\phi-\theta)-\sin\theta\cos(\phi-\theta)\left(r/n-1\right)}{\sin^{2}(\phi-\theta)}\nonumber \\
 & = & \frac{\sin\phi-r\sin\theta\cos(\phi-\theta)/n}{\sin^{2}(\phi-\theta)},\label{eq:derivfirstfrac}
\end{eqnarray}
\begin{equation}
\frac{d}{d\theta}\left(\frac{\sin\theta}{\sin\phi}\right)=\frac{\cos\theta\sin\phi-r\sin\theta\cos\phi/n}{\sin^{2}\phi},\label{eq:derivsecondfrac}
\end{equation}
and 
\begin{eqnarray}
\frac{d}{d\theta}\left(\frac{\sin(\phi-\theta)}{\sin\phi}\right) & = & \frac{\sin\phi\cos(\phi-\theta)(r/n-1)-r\sin(\phi-\theta)\cos\phi/n}{\sin^{2}\phi}\nonumber \\
 & = & \frac{r\sin\theta/n-\sin\phi\cos(\phi-\theta)}{\sin^{2}\phi}.\label{eq:derivthirdfrac}
\end{eqnarray}
Armed with these calculations we now consider three cases, depending
on the relative sizes of $n$ and $r$.\\
 
\begin{itemize}
\item[\underline{Case $1\ne n=r$}] In this case $z(\theta)$ simplifies to $z(\theta)=\sin\theta/\sin\phi$.
This is an increasing function on $(0,\pi/r)$, since its derivative
(\ref{eq:derivsecondfrac}) is equal to ${\displaystyle {\frac{\sin((n-1)\pi/n)}{\sin((n-1)\pi/n+\theta)}}}$,
a quantity strictly bigger than 0 for $\theta\in(0,\pi/r)$. Finally,
we observe that $z(\theta)$ is continuous on $(0,\pi/r)$, ${\displaystyle {\lim_{\theta\to0}z(\theta)=0}}$,
and that ${\displaystyle {\lim_{\theta\to\pi/r}z(\theta)=+\infty}}$.
We conclude that $z(\theta)$ maps $(0,\pi/r)$ to $(0,\infty)$.
\item[\underline{Case $1\le r<n$}] We write 
\[
z(\theta)=\left(\frac{\sin\theta}{\sin(\phi-\theta)}\right)^{n-r}\left(\frac{\sin\theta}{\sin\phi}\right)^{r},
\]
and note that the inequalities 
\begin{equation}
\pi-\frac{\pi}{n}<\phi<\pi\label{eq:phirange}
\end{equation}
and 
\begin{equation}
\pi-\frac{\pi}{r}<\phi-\theta<\pi-\frac{\pi}{n}\label{eq:phi-thetarange}
\end{equation}
hold for $\theta\in(0,\pi/r)$. We now demonstrate that $\sin\theta/\sin(\phi-\theta)$
and $\sin\theta/\sin\phi$ are both increasing functions of $\theta$
on $(0,\pi/r)$. If $2\le r<n$, then $\cos\theta>0$ and $\cos(\phi-\theta)<0$.
Consequently (\ref{eq:derivfirstfrac}) and (\ref{eq:derivsecondfrac})
are positive. By checking the limits as $\theta\rightarrow0$ and
$\theta\rightarrow\pi/r$, we see that $z(\theta)$ maps $(0,\pi/r)$
onto $(0,\infty)$. If $r=1$ and $\theta\le\pi/2$, then $\cos\theta\ge0$
and $\cos(\phi-\theta)\le0$ and $z(\theta)$ is an increasing function
of $\theta$ for the same reason.\\
 \indent Finally, we treat the case when $r=1$ and $\pi>\theta>\pi/2$.
We write 
\[
\sin\phi=\sin\frac{(n-1)\pi+\theta}{n}=\sin\frac{\pi-\theta}{n},
\]
and note that for any angle $0<\alpha<\pi/2$, the inequality 
\[
\sin\frac{\alpha}{n}>\frac{1}{n}\sin\alpha
\]
holds, as the two sides agree when $\alpha=0$ and the derivative
of the left side is greater than that of the right side when $0<\alpha<\pi/2$.
Thus $\sin\phi>\sin\theta/n$ and (\ref{eq:derivfirstfrac}) is positive.
To show (\ref{eq:derivsecondfrac}) is positive, we note that for
any angle $0<\alpha<\pi/2$, the inequality 
\begin{equation}
\frac{1}{n}\sin\alpha\cos\frac{\alpha}{n}>\cos\alpha\sin\frac{\alpha}{n}\label{eq:trigineq}
\end{equation}
holds since the two sides are equal when $\alpha=0$ and their respective
derivatives satisfy 
\[
\frac{1}{n}\cos\alpha\cos\frac{\alpha}{n}-\frac{1}{n^{2}}\sin\alpha\sin\frac{\alpha}{n}>\frac{1}{n}\cos\alpha\cos\frac{\alpha}{n}-\sin\alpha\sin\frac{\alpha}{n}.
\]
Applying (\ref{eq:trigineq}) with $\alpha=\pi-\theta$ establishes
that (\ref{eq:derivsecondfrac}) is positive. Thus $z(\theta)$ is
an increasing function for all pairs $r,n$ under consideration. We
next compute 
\begin{eqnarray}
\lim_{\theta\rightarrow\pi}\frac{\sin\theta}{\sin(\phi-\theta)} & = & \lim_{\theta\rightarrow\pi}\frac{\sin(\pi-\theta)}{\sin\left((n-1)(\pi-\theta)/n\right)}=\frac{n}{n-1},\label{eq:limitfirstquotient}\\
\lim_{\theta\rightarrow\pi}\frac{\sin\theta}{\sin\phi} & = & \lim_{\theta\rightarrow\pi}\frac{\sin(\pi-\theta)}{\sin((\pi-\theta)/n)}=n.\label{eq:limitsecondquotient}
\end{eqnarray}
Consequently, if $1=r<n$, then $z(\theta)$ maps the interval $(0,\pi)$
to $(0,n^{n}/(n-1)^{n-1})$. 
\item[\underline{Case $1\le n<r$}] We apply arguments akin to those above to the function 
\[
z(\theta)=\left(\frac{\sin\theta}{\sin\phi}\right)^{n}\left(\frac{\sin(\phi-\theta)}{\sin\phi}\right)^{r-n}
\]
and see that $z(\theta)$ is an increasing function of $\theta$ on
$(0,\pi/r)$. If $n>1$, then $z$ maps $(0,\pi/r)$ onto $(0,\infty$).
If $n=1$, we compute 
\begin{eqnarray}
\lim_{\theta\rightarrow0}\frac{\sin\theta}{\sin\phi} & = & \frac{1}{r}\qquad\text{and}\label{eq:limitthirdquotient}\\
\lim_{\theta\rightarrow0}\frac{\sin(\phi-\theta)}{\sin\phi} & = & \frac{r-1}{r},\label{eq:limitfourthquotient}
\end{eqnarray}
and conclude that $z(\theta)$ maps the interval $(0,\pi/r)$ onto
$((r-1)^{r-1}/r^{r},\infty)$. The proof is complete. 
\end{itemize}
\end{proof}
We now reformulate the condition $D_{n,r}(t,z)=0$ by rescaling the
zeros of $D_{n,r}(t,z)$. Although it may seem insignificant at first,
this change in point of view will enable to us derive key magnitude
estimates for these zeros (see Proposition \ref{lem:zerosQ}). These
estimates in turn lay the foundation for the asymptotic analysis,
and with that for the proof of Theorem \ref{maintheorem}, in Section
\ref{proofmainthm}. \\
 \indent We proceed as follows. Suppose $t=|t|e^{-i\theta}$ is a
zero of $D_{n,r}(t,z)$, set 
\begin{eqnarray*}
t_{0} & = & \frac{\sin\phi}{\sin(\phi-\theta)}e^{-i\theta}\qquad\text{(see (\ref{eq:t0form})), and}\\
t_{1} & = & t_{0}e^{2i\theta}=\overline{t_{0}}.
\end{eqnarray*}
After labeling the remaining $\max\{n,r\}-2$ zeros of $D_{n,r}(t,z)$
as $t_{2},t_{3},\ldots,t_{\max\{n,r\}-1}$, write 
\begin{eqnarray*}
q_{k} & = & \frac{t_{k}}{t_{0}},\qquad\text{and}\\
\zeta_{k} & = & q_{k}e^{-i\theta}\qquad\text{for}\quad0\leq k<\max\{n,r\}.
\end{eqnarray*}
With this notation we rewrite $(1-t_{k})^{n}+zt_{k}^{r}=0$ as 
\[
-z=\frac{(1-t_{k})^{n}}{t_{k}^{r}}=\frac{(t_{0}^{-1}-q_{k})^{n}}{q_{k}^{r}}t_{0}^{n-r}.
\]
Combining (\ref{eq:t0form}) with (\ref{eq:ztheta}) we see that $D_{n,r}(t_{k},z)=0$
if and only if 
\[
-\frac{\sin^{n}\theta}{\sin^{n-r}(\phi-\theta)\sin^{r}\phi}=\frac{\left(\sin(\phi-\theta)/\sin\phi-\zeta_{k}\right)^{n}}{\zeta_{k}^{r}}\left(\frac{\sin\phi}{\sin(\phi-\theta)}\right)^{n-r},
\]
or equivalently 
\begin{equation}
\left(\frac{\sin(\phi-\theta)}{\sin\phi}-\zeta_{k}\right)^{n}+\zeta_{k}^{r}\left(\frac{\sin\theta}{\sin\phi}\right)^{n}=0.\label{eq:zetatheta}
\end{equation}
We remark that $k=0,1$ correspond to the two trivial solutions of
(\ref{eq:zetatheta}), namely $\zeta_{0}=e^{-i\theta}$ and $\zeta_{1}=e^{i\theta}$.
The next proposition establishes that these are the solutions of (\ref{eq:zetatheta})
of smallest magnitude.
\begin{prop}
\label{lem:zerosQ} Suppose $n,r\in\mathbb{N}$ are such that $\max\{n,r\}>1$,
$\theta\in(0,\pi/r)$ and $\phi=((n-1)\pi+r\theta)/n$. The polynomial
\begin{equation}
Q(\zeta)=\left(\frac{\sin(\phi-\theta)}{\sin\theta}-\zeta\frac{\sin\phi}{\sin\theta}\right)^{n}+\zeta^{r}\label{eq:Qzeta}
\end{equation}
has exactly two zeros on the unit circle. All other zeros of $Q(\zeta)$
lie outside the closed unit disk $|\zeta|\leq1$. \end{prop}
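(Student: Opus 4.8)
The plan is to exploit the correspondence recorded just before the statement: the zeros of $Q$ are exactly the rescaled quantities $\zeta_k$ satisfying (\ref{eq:zetatheta}), the two points $\zeta_0=e^{-i\theta}$ and $\zeta_1=e^{i\theta}$ are already known to be zeros of $Q$ on the unit circle (and are distinct since $\theta\in(0,\pi/r)\subset(0,\pi)$), and $\deg Q=\max\{n,r\}$. Abbreviating $a=\sin(\phi-\theta)/\sin\theta$ and $b=\sin\phi/\sin\theta$, so that $Q(\zeta)=(a-b\zeta)^{n}+\zeta^{r}$, the proposition splits into (i) $e^{\pm i\theta}$ are the \emph{only} zeros of $Q$ on $|\zeta|=1$, and (ii) the remaining $\max\{n,r\}-2$ zeros all satisfy $|\zeta|>1$. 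First I would record the algebraic identity
$$a^{2}-2ab\cos\theta+b^{2}=1,$$
obtained by substituting $\sin(\phi-\theta)=\sin\phi\cos\theta-\cos\phi\sin\theta$ and simplifying; equivalently it expresses the relation $a-be^{i\theta}=-e^{i\phi}$ that is implicit in the computation of $(1-t)^{n}$ leading to (\ref{eq:ztheta}). Since $\phi\in(0,\pi)$ one has $b>0$, and $a\neq0$ throughout $(0,\pi/r)$, so in particular $ab\neq0$.

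For claim (i): if $|\zeta|=1$ and $Q(\zeta)=0$, then $(a-b\zeta)^{n}=-\zeta^{r}$ forces $|a-b\zeta|=|\zeta|^{r/n}=1$. Writing $\zeta=e^{i\psi}$ and invoking the identity above gives
$$|a-be^{i\psi}|^{2}-1=2ab(\cos\theta-\cos\psi),$$
so that (because $ab\neq0$) the condition $|a-b\zeta|=1$ holds exactly when $\cos\psi=\cos\theta$, i.e. $\psi=\pm\theta$. Thus $e^{\pm i\theta}$ are the only possible zeros on the circle, and since both are in fact zeros, there are exactly two.

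Claim (ii) is the crux, because the natural Rouch\'e comparison of $(a-b\zeta)^{n}$ with $\zeta^{r}$ degenerates: on $|\zeta|=1$ the quantity $|a-b\zeta|^{n}-|\zeta^{r}|$ vanishes and changes sign precisely at $e^{\pm i\theta}$, which is exactly where the zeros sit, so neither term dominates on the whole circle. Instead I would argue by continuity in $\theta$ over the connected interval $(0,\pi/r)$. The coefficients of $Q$ are real-analytic in $\theta$, so—after checking the leading coefficient—its zeros vary continuously (in $\mathbb{CP}^{1}$, to accommodate a possible escape to infinity). By claim (i) applied at \emph{every} $\theta$, the only zeros ever meeting $|\zeta|=1$ are the moving pair $e^{\pm i\theta}$; hence the other $\max\{n,r\}-2$ zeros never touch the circle and each stays in a single component of $\{|\zeta|\neq1\}$. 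Consequently the number of them inside the open unit disk is constant in $\theta$, and it suffices to evaluate this count at one convenient value (or, reverting to the variable $t$, to verify for a single explicit $z\in I$ that $t_{0},\overline{t_{0}}$ are the strictly smallest in modulus among the zeros of $(1-t)^{n}+zt^{r}$).

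I expect the main obstacle to be making this continuity argument airtight at its two pressure points. First, one must prevent an interior zero from slipping onto the circle by colliding with $e^{\pm i\theta}$, which means verifying that these boundary zeros remain simple, i.e. $Q'(e^{\pm i\theta})\neq0$ for all $\theta\in(0,\pi/r)$. Since $Q'(e^{i\theta})=-nb(-1)^{n-1}e^{i(n-1)\phi}+re^{i(r-1)\theta}$, this reduces to showing that the modulus condition $nb=r$ and the accompanying phase condition are never met simultaneously—a short but genuinely case-sensitive trigonometric check, with $n=r$ the delicate regime (there the leading coefficient $(-b)^{n}+1$ can itself vanish when $b=1$, sending one zero to infinity, so the $n=r$ case needs the compactified viewpoint or separate treatment). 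Second, the endpoints $\theta\to0^{+}$ and $\theta\to(\pi/r)^{-}$ are degenerate—there $a,b$ blow up or the top coefficient vanishes and all zeros cluster toward the circle—so the base count should be taken at an interior $\theta$ rather than in a limit.
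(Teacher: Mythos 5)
Your claim (i) is correct, and it is actually cleaner than the paper's treatment of the unit-circle zeros (the paper substitutes $\zeta\mapsto\zeta^{n}$, passes to $R(\zeta)=\sin(\phi-\theta)-\zeta^{n}\sin\phi-\omega\zeta^{r}\sin\theta$, and argues geometrically with two intersecting circles). Your identity $|a-be^{i\psi}|^{2}-1=2ab(\cos\theta-\cos\psi)$, together with $ab>0$ (both $\phi$ and $\phi-\theta$ lie in $(0,\pi)$ for all $\theta\in(0,\pi/r)$), pins any unit-modulus zero to $e^{\pm i\theta}$ in one line. Moreover, the first ``pressure point'' you flag is not actually delicate and requires no case analysis: using $a-be^{i\theta}=-e^{i\phi}$ and $n\phi=(n-1)\pi+r\theta$ one gets
\[
Q'(e^{i\theta})=e^{i(r-1)\theta}\left(r-n\frac{\sin\phi}{\sin\theta}\,e^{i(\theta-\phi)}\right),
\]
and since $0<\phi-\theta<\pi$ the factor $e^{i(\theta-\phi)}$ is never a positive real number, so $Q'(e^{\pm i\theta})\neq 0$ for every $\theta\in(0,\pi/r)$. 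Combined with claim (i), this does rule out any interior zero ever reaching the circle, since it would have to coincide with $e^{\pm i\theta}$ and create a multiple zero there.

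The genuine gap is the base case of your continuation argument, and it is not a technicality: it is where the entire content of the proposition lives. Continuity of the zeros in $\theta$ (even in $\mathbb{CP}^{1}$, to absorb the degree drop when $n=r$ and $b=1$) only shows that the number of zeros in the open unit disk is \emph{constant} on $(0,\pi/r)$; it proves nothing until that number is actually computed at some $\theta$, and you never exhibit a $\theta$ (or a $z$) at which this can be done. For general $n,r$ this is not a finite check: ``verify for a single explicit $z\in I$ that $t_{0},\overline{t_{0}}$ are strictly smallest in modulus'' is a statement about a family of polynomials of unbounded degree, i.e.\ a theorem needing its own proof, and at a generic interior $\theta$ the zeros have no exploitable closed form (only in the special case $n=r$ are they explicit). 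The only places where the zeros become tractable are exactly the degenerate limits $\theta\to 0^{+}$ and $\theta\to(\pi/r)^{-}$ that you exclude; making either usable requires the perturbation expansions $\zeta_{k}=1+\theta\bigl(\cos(\pi/n)-\omega_{k}\bigr)/\sin(\pi/n)+\mathcal{O}(\theta^{2})$, plus control of the zeros escaping to infinity --- precisely the analysis the paper carries out later (Cases 2 and 3 of Proposition \ref{prop:signchangegeneral}). The paper's proof of the present proposition avoids needing any base point at all: for each fixed $\theta$ it shows the image loop $R(e^{i\psi})$ never meets the ray $(-\infty,0)$, so the relevant winding number vanishes and the argument principle excludes zeros from the open disk; the case $n,r\geq 2$ is handled by a congruence argument, $n=1$ by citing Lemma 8 of \cite{tran-1}, and $r=1$ by reduction to $n=1$. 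As written, your argument for claim (ii) is therefore a valid reduction, not a proof: either supply a rigorous count at one parameter value, or replace the deformation by a direct per-$\theta$ argument of the paper's kind.
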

\begin{proof}
We replace $\zeta$ by $\zeta^{n}$ and consider the zeros\footnote{The change of variables $\zeta\to\zeta^{n}$ maps the (interior, exterior
and the) unit circle to itself (resp). Hence if we prove the proposition
for $\widetilde{Q(\zeta)}$, we simultaneously also get the result
for $Q(\zeta)$.} of 
\[
\widetilde{Q(\zeta)}=\left(\frac{\sin(\phi-\theta)}{\sin\theta}-\zeta^{n}\frac{\sin\phi}{\sin\theta}\right)^{n}+\zeta^{rn}.
\]
Note that if $\zeta$ is a zero of $\widetilde{Q(\zeta)}$, then $\zeta$
is a zero of 
\begin{eqnarray}
R(\zeta) & = & \sin(\phi-\theta)-\zeta^{n}\sin\phi-\omega\zeta^{r}\sin\theta\label{eq:Qzetatransf}\\
 & = & \sin\theta(-\omega\zeta^{r}-\cos\phi)-\sin\phi(\zeta^{n}-\cos\theta),\nonumber 
\end{eqnarray}
where $\omega=\exp\{i(\pi+2\pi k)/n\}$ for some $0\leq k<n$. Consequently,
it suffices to establish the result for $R(\zeta)$.\\
 \foreignlanguage{english}{\textbf{Zeros of $R(\zeta)$ on the unit
circle.} Consider the image of the unit circle $\zeta=e^{i\psi}$,
$0\le\psi<2\pi$, under $R$: 
\[
R(e^{i\psi})=\sin\theta(e^{i(r\psi+\alpha)}-\cos\phi)-\sin\phi(e^{in\psi}-\cos\theta),\qquad0\leq\psi<2\pi,
\]
where 
\[
\alpha=\pi+\frac{\pi+2k\pi}{n},\qquad\text{for some}\quad0\le k<n.
\]
The two curves 
\begin{eqnarray*}
C_{1} & : & \qquad\sin\theta(e^{i(r\psi+\alpha)}-\cos\phi),\qquad0\leq\psi<2\pi\quad\text{and}\\
C_{2} & : & \qquad\sin\phi(e^{in\psi}-\cos\theta)\qquad0\leq\psi<2\pi
\end{eqnarray*}
are circles with radii $r_{1}=\sin\theta,r_{2}=\sin\phi$ and centers
$z_{1}=-\sin\theta\cos\phi,z_{2}=-\sin\phi\cos\theta$ respectively.
(see Figure \ref{fig:Qzeta} when $\cos\theta>0$ and $\cos\phi<0$).
Thus, the equality $R(e^{i\psi})=0$ holds only when these two circles
intersect, and hence the solutions of $R(\zeta)=0$ on the unit circle
must satisfy $n\psi\equiv\pm\theta\mbox{ (mod \ensuremath{2\pi})}$,
or equivalently, $\zeta^{n}=e^{\pm i\theta}$. }\\

\begin{figure}[H]
\begin{centering}
\includegraphics[scale=0.4]{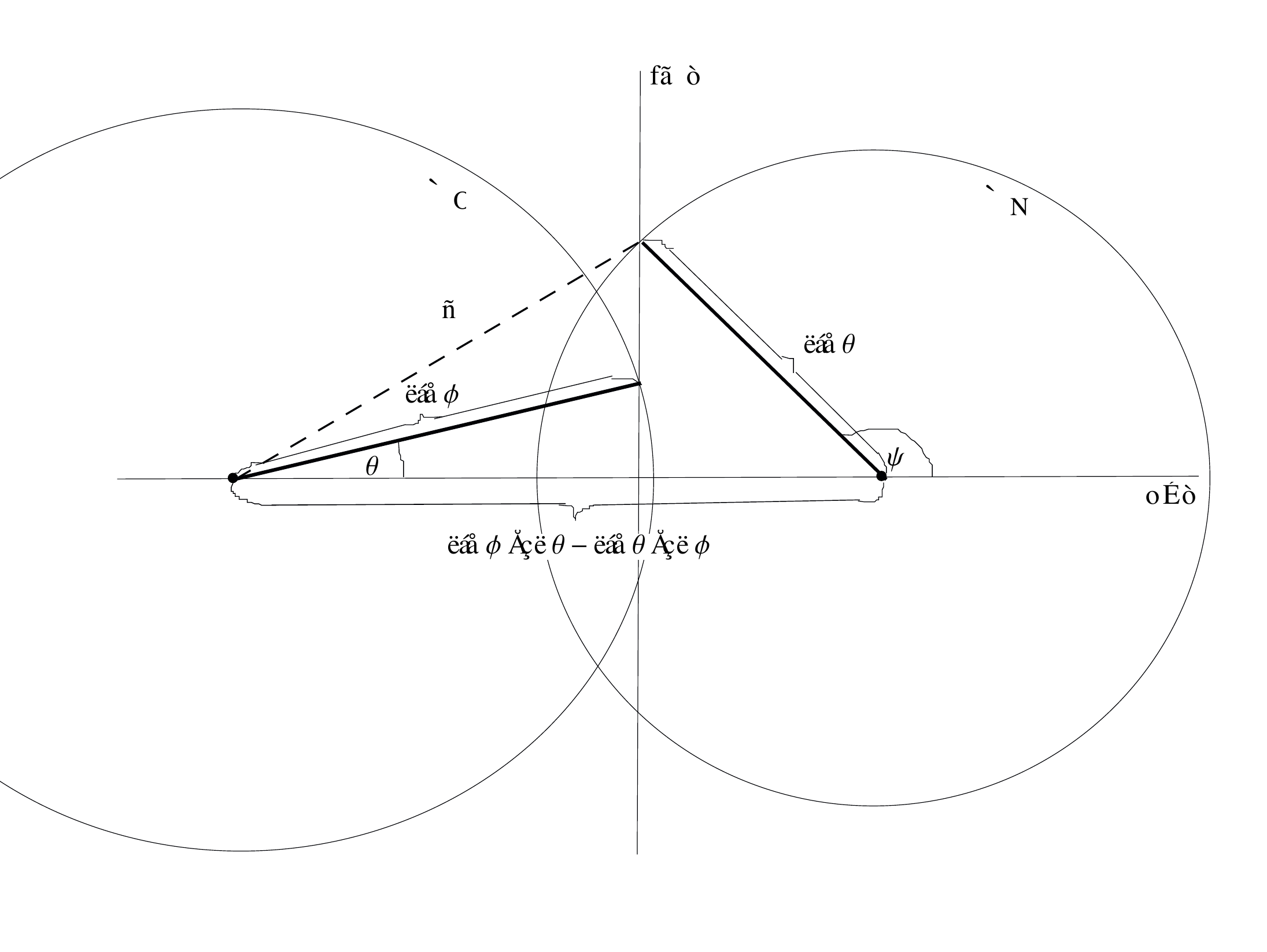} 
\par\end{centering}

\caption{\label{fig:Qzeta0}Generic positioning of the two circles $C_{1}$
and $C_{2}$}
\end{figure}

A priori we don't know where the points of intersection (if any) of
the two circles $C_{1}$ and $C_{2}$ are. A quick calculation using
the law of cosines shows however, that in Figure \ref{fig:Qzeta0},
we must in fact have $x=\sin\phi$, and we obtain the more accurate
Figure \ref{fig:Qzeta}. \\

\begin{figure}[H]
\begin{centering}
\includegraphics[scale=0.4]{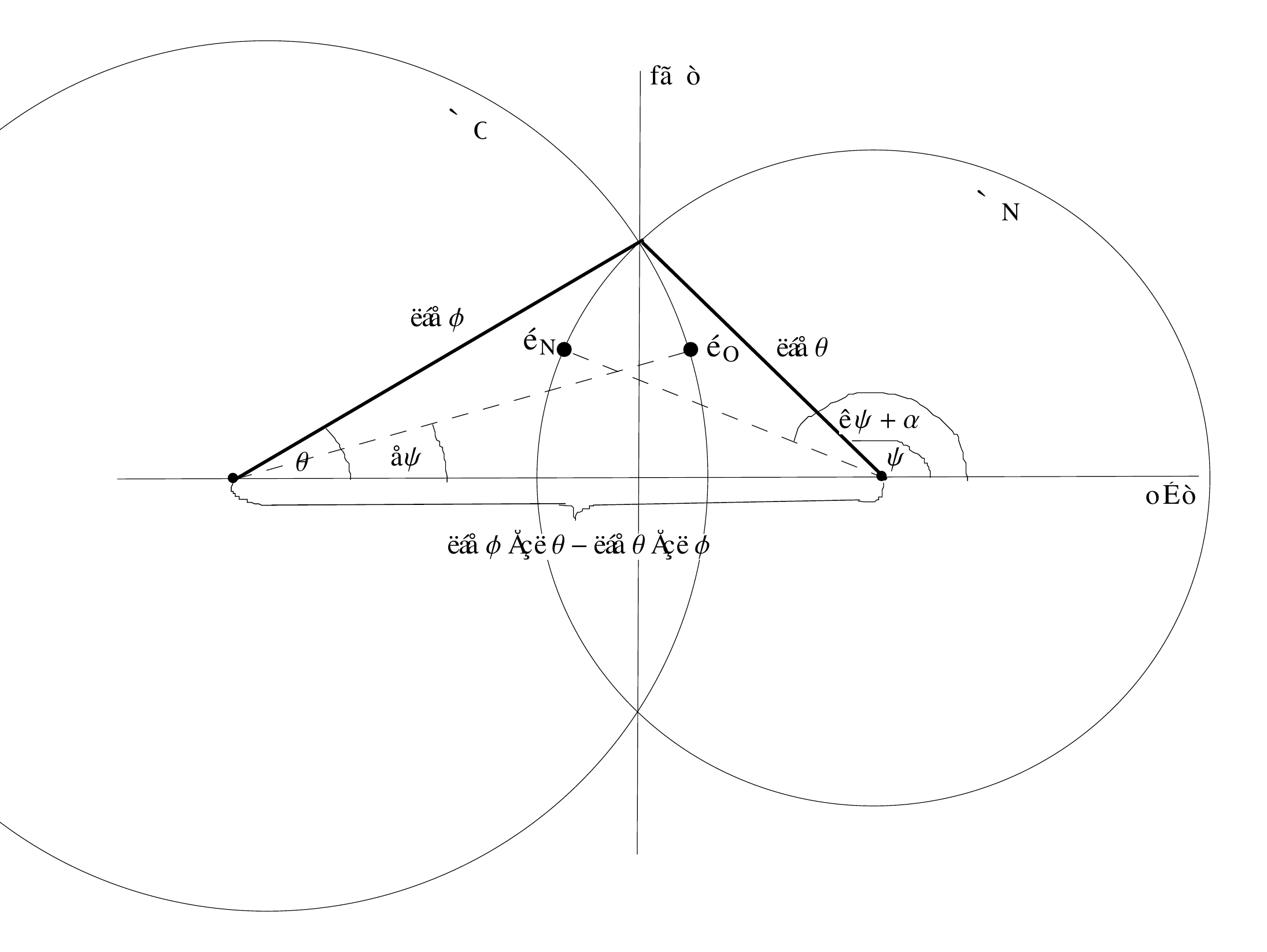} 
\par\end{centering}

\caption{\label{fig:Qzeta}The two circles $C_{1}$ and $C_{2}$}
\end{figure}

\selectlanguage{english}%
\noindent \textbf{Zeros of $R(\zeta)$ inside the unit disk.} We claim
that the loop $\Gamma:=R(e^{i\psi})$, $0\leq\psi<2\pi$, does not
intersect the ray $(-\infty,0)$, and hence the winding number of
$\Gamma$ about any point on $(-\infty,0)$ is $0$. We demonstrate
the claim by treating three different cases, distinguished by whether
$r$, $n$, or neither is equal to one. \\
 If $r\geq2$ and $n\geq2$, then $\cos\theta>0$ and $\cos\phi<0$.
Suppose, by way of contradiction, that $\Gamma$ intersects the ray
$(-\infty,0)$. Then modulo $2\pi$ the inequalities 
\begin{eqnarray*}
 &  & -\theta<n\psi<\theta\quad\text{and}\\
 &  & \phi<r\psi+\alpha<2\pi-\phi
\end{eqnarray*}
must hold. Whence, the points $p_{1}$ and $p_{2}$ (on $C_{1}$ and
$C_{2}$ respectively) corresponding to $\psi$ must lie on the arc
of $C_{1}(C_{2}$ resp) that is inside of $C_{2}(C_{1}$ resp), (see
Figure \ref{fig:Qzeta}). Write 
\begin{eqnarray}
\theta-n\psi & \equiv & \beta\pmod{2\pi}\quad\text{and}\label{eq:congr1}\\
r\psi+\alpha-\phi & \equiv & \gamma\pmod{2\pi}\label{eq:congr2}
\end{eqnarray}
where, given the symmetry about the $x$-axis, we may without loss of generality assume
that $0<\beta\le\theta$ and $0<\gamma\le(\pi-r\theta)/n$. We add
$r$ times the first congruence to $n$ times the second congruence
and obtain 
\[
r\beta+n\gamma\equiv0\mbox{ (mod \ensuremath{2\pi})},
\]
which is impossible since $0<r\beta+n\gamma\le\pi$.\\
If $n=1$, the fact that the zeros of the function 
\[
R(\zeta)=\sin((r-1)\theta)-\zeta\sin r\theta+\zeta^{r}\sin\theta
\]
lie outside the open unit disk is a direct consequence of Lemma 8
in \cite{tran-1}. If $r=1$, we write $R(\zeta)$ as 
\begin{eqnarray*}
R(\zeta) & = & \sin\theta(-\omega\zeta-\cos\phi)-\sin\phi(\zeta^{n}-\cos\theta)\\
 & = & \sin(\pi-\theta)\left(-\omega\zeta+\cos\frac{\pi-\theta}{n}\right)-\sin\frac{\pi-\theta}{n}\left(\zeta^{n}+\cos(\pi-\theta)\right).
\end{eqnarray*}
Applying the substitutions $\omega\zeta\rightarrow\zeta$, $(\pi-\theta)/n\rightarrow\theta$,
and $n\rightarrow r$, reduces this case to the case $n=1$, and completes the
proof. 
\end{proof}
We close this section with a result concerning the multiplicities
of the zeros of $Q(\zeta)$, in addition to their location furbished
by Proposition \ref{lem:zerosQ}.
\begin{lem}
\label{lem:distinctzeros} Let $n,r\in\mathbb{N}$ be such that $\max\{n,r\}>1$,
$0<\theta<\pi/r$, and ${\displaystyle {\phi=\frac{(n-1)\pi+r\theta}{n}}}$.
The zeros of the polynomial 
\[
Q(\zeta)=\left(\frac{\sin(\phi-\theta)}{\sin\theta}-\zeta\frac{\sin\phi}{\sin\theta}\right)^{n}+\zeta^{r}
\]
are distinct, except when 
\begin{enumerate}
\item $n>r>1$ and $r$ is odd, or 
\item $r>n>1$ and $n$ is odd . 
\end{enumerate}
If (1) or (2) hold, then there exists a unique $\theta^{*}\in(0,\pi/r)$
so that $Q(\zeta)$ has one double zero, and all of its remaining
zeros are distinct. \end{lem}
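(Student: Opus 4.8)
The plan is to detect repeated zeros via the classical criterion that $Q$ and $Q'$ share a root, and then to convert that criterion into a single equation in $\theta$ that can be resolved using the monotonicity of $z(\theta)$ from Lemma \ref{lem:zfunctheta}. Writing $a=\sin(\phi-\theta)/\sin\theta$ and $b=\sin\phi/\sin\theta$, both of which are positive on $(0,\pi/r)$ as a short check of the ranges of $\phi$ and $\phi-\theta$ confirms, we have $Q(\zeta)=(a-b\zeta)^{n}+\zeta^{r}$ and $Q'(\zeta)=-nb(a-b\zeta)^{n-1}+r\zeta^{r-1}$. A repeated zero $\zeta_{0}$ satisfies $Q(\zeta_{0})=Q'(\zeta_{0})=0$; setting $w=a-b\zeta_{0}$ these become $w^{n}=-\zeta_{0}^{r}$ and $nbw^{n-1}=r\zeta_{0}^{r-1}$, and dividing one by the other gives the tangency relation $rw=-nb\zeta_{0}$. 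When $n=r$ this forces $a=0$, which is impossible, so $Q$ has only simple zeros in that case. When $n\ne r$ the tangency relation determines the unique candidate location $\zeta_{0}=ar/(b(r-n))$ together with $w=-an/(r-n)$.

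Substituting these back into $w^{n}=-\zeta_{0}^{r}$ is where the argument crystallizes. Since both $w$ and $\zeta_{0}$ are proportional to $a$, the $a$- and $b$-dependence bundles into the single factor $a^{n-r}b^{r}=\sin^{n-r}(\phi-\theta)\sin^{r}\phi/\sin^{n}\theta=1/z(\theta)$, the reciprocal of the closed form (\ref{eq:ztheta}), and the condition collapses to the scalar equation $z(\theta)=z^{*}$ for an explicit constant $z^{*}$. Careful bookkeeping of the factor $(r-n)^{r-n}$ (rewritten with the positive base $(n-r)^{n-r}$ when $r<n$) shows that $z^{*}>0$ exactly when either $r>n$ with $n$ odd, giving $z^{*}=n^{n}(r-n)^{r-n}/r^{r}$, or $r<n$ with $r$ odd, giving $z^{*}=n^{n}/(r^{r}(n-r)^{n-r})$; in every other parity configuration $z^{*}<0$, and since $z(\theta)>0$ no repeated zero can occur.

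It remains to locate $z^{*}$ relative to $I$. By Lemma \ref{lem:zfunctheta}, $z(\theta)$ is strictly increasing and maps $(0,\pi/r)$ onto $I$, so $z(\theta)=z^{*}$ has at most one solution, and exactly one precisely when $z^{*}\in I$. If $n,r>1$ then $I=(0,\infty)$, the positive value $z^{*}$ lies in $I$, and we obtain a unique $\theta^{*}$. If $r=1$ then $z^{*}=n^{n}/(n-1)^{n-1}$ is precisely the right endpoint of $I=(0,n^{n}/(n-1)^{n-1})$, and if $n=1$ then $z^{*}=(r-1)^{r-1}/r^{r}$ is precisely the left endpoint of $I=((r-1)^{r-1}/r^{r},\infty)$; in both boundary situations $z^{*}\notin I$, so no repeated zero exists. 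This is exactly why the exceptional list is (1) and (2) with both $n,r>1$.

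Because the tangency relation forces any repeated zero to coincide with the single point $\zeta_{0}$, there is at most one repeated zero, and all remaining zeros are automatically simple. To confirm that its multiplicity is exactly two rather than higher, I would compute $Q''(\zeta_{0})$: using $w^{n-2}=-r^{2}\zeta_{0}^{r-2}/(n^{2}b^{2})$, which follows from the two root equations, one obtains the clean value $Q''(\zeta_{0})=r(r-n)\zeta_{0}^{r-2}/n$, nonzero since $n\ne r$ and $\zeta_{0}\ne0$. The main obstacle is the sign-and-exponent bookkeeping in the reduction to $z(\theta)=z^{*}$: the parity hypotheses on $r$ and $n$, together with the exclusion of the endpoint cases $n=1$ and $r=1$, emerge precisely from correctly tracking the sign of $(r-n)^{r-n}$ and then comparing $z^{*}$ with the endpoints of $I$.
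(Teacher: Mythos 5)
Your argument is correct and follows essentially the same route as the paper's own proof: both dispose of $n=r$ separately, isolate the unique nonzero candidate $\zeta_{*}$ from $Q'(\zeta_{*})=0$, substitute it into $Q(\zeta_{*})=0$ to reduce the existence of a repeated zero to the scalar equation $z(\theta)=z^{*}$, and then invoke the monotonicity and range statement of Lemma \ref{lem:zfunctheta} to conclude that a (necessarily unique) solution $\theta^{*}$ exists precisely in cases (1) and (2). Your explicit endpoint comparison showing $z^{*}\notin I$ when $r=1$ or $n=1$, and your computation $Q''(\zeta_{0})=r(r-n)\zeta_{0}^{r-2}/n\neq0$ confirming the multiplicity is exactly two, are welcome elaborations of details the paper leaves implicit.
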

\begin{proof}
Suppose that $\zeta_{*}$ is a multiple zero of $Q(\zeta)$, that
is, $Q(\zeta_{*})=Q'(\zeta_{*})=0$. If $n=r$, $\zeta_{*}$ would
have to satisfy $\zeta_{*}^{n-1}(\sin(\phi-\theta)/\sin\phi)=0$,
and hence $\zeta_{*}=0$. Since $Q(0)\neq0$, we conclude that if
$n=r$, $Q(\zeta)$ has no multiple zeros.\\
 If $n\neq r$, the equations $Q(\zeta_{*})=0=Q'(\zeta_{*})$ imply
that 
\begin{eqnarray}
Q'(\zeta_{*}) & = & -n\frac{\sin\phi}{\sin\theta}\left(\frac{\sin(\phi-\theta)}{\sin\theta}-\zeta_{*}\frac{\sin\phi}{\sin\theta}\right)^{n-1}+r\zeta_{*}^{r-1}\nonumber \\
 & = & \zeta_{*}^{r-1}\left(n\frac{\zeta_{*}\sin\phi}{\sin(\phi-\theta)-\zeta_{*}\sin\phi}+r\right)\label{eq:Qprime}\\
 & = & 0.\nonumber 
\end{eqnarray}
We note that (\ref{eq:Qprime}) has the unique non-zero solution 
\[
\zeta_{*}=-\frac{r}{n-r}\frac{\sin(\phi-\theta)}{\sin\phi}.
\]
Substituting this expression into the equation $Q(\zeta^{*})=0$ yields
\begin{equation}
\frac{\sin^{n}(\phi-\theta)}{\sin^{n}\theta}\left(\frac{n^{n}}{(n-r)^{n}}+(-1)^{r}\frac{r^{r}}{(n-r)^{r}}\frac{\sin^{n}\theta}{\sin^{r}\phi\sin^{n-r}(\phi-\theta)}\right)=0.\label{eq:Qmultzero}
\end{equation}
By Lemma \ref{lem:zfunctheta}, the left hand side of equation (\ref{eq:Qmultzero})
does not vanish for $\theta\in(0,\pi/r)$, unless (1) or (2) hold,
in which case it vanishes for a unique value $\theta^{*}\in(0,\pi/r)$. 
\end{proof}

\section{The proof of Theorem \ref{maintheorem}}

\label{proofmainthm} We now turn our attention to the proof of Theorem
\ref{maintheorem}, which admittedly is technical. As such, that the
reader may benefit from seeing the proof of a special case before
proceeding to the general case. Recall that the generating function
$1/D_{n,r}(t,z)$ of the sequence $\left\{ P_{m}(z)\right\} _{m=0}^{\infty}$
depends on $n,r\in\mathbb{N}$. If $\max\{n,r\}=2$, the result in
Theorem 1 is in fact a consequence of Theorem 1 in \cite{tran}. In
the following proposition we treat the case $\max\{n,r\}=3$. As it
turns out, in this case the conclusions of Theorem \ref{maintheorem}
can be strengthened to $P_{m}(z)$ having only real zeros\footnote{By convention we consider all constant polynomials to have only real
zeros (namely none), including the zero polynomial, which clearly
has infinitely many non-real zeros.} \foreignlanguage{english}{\textit{for all} $m\geq0$.}
\begin{prop}
\label{illustration} Suppose that $n,r\in\mathbb{N}$ be such that
$\max\{n,r\}=3$. Let $D_{n,r}(t,z)=(1-t)^{n}+zt^{r}$, and let $I$
be the real interval in the statement of Theorem \ref{maintheorem}.
Suppose further that 
\[
\sum_{m=0}^{\infty}P_{m}(z)t^{m}=\frac{1}{D_{n,r}(t,z)},
\]
and write $\mathcal{Z}(P_{m})$ for the set of zeros of $P_{m}(z)$.
Then $\mathcal{Z}(P_{m})\subset I$ for all $m\geq0$, and ${\displaystyle {\bigcup_{m=0}^{\infty}\mathcal{Z}(P_{m})}}$
is dense in $I$. \end{prop}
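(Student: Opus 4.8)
The plan is to count, for each fixed $m$, at least as many real zeros of $P_m(z)$ inside $I$ as the degree of $P_m(z)$ allows, thereby forcing every zero into $I$. By Lemma \ref{degree} it suffices to exhibit $\lfloor m/r\rfloor$ distinct real zeros in $I$. The organizing device is the reparametrization furnished by Lemma \ref{lem:zfunctheta}: since $\theta\mapsto z(\theta)$ is a strictly increasing bijection of $(0,\pi/r)$ onto $I$, zeros of $P_m$ in $I$ correspond bijectively to zeros of the real function $F_m(\theta):=P_m(z(\theta))$ on $(0,\pi/r)$, and density of $\bigcup_m\mathcal{Z}(P_m)$ in $I$ is equivalent to density of the zeros of the $F_m$ in $(0,\pi/r)$ as $m\to\infty$.

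The next step is to write $P_m$ explicitly by partial fractions. When $\max\{n,r\}=3$, the polynomial $D_{n,r}(t,z)$ has, for each $z\in I$, exactly three zeros in $t$: a conjugate pair $t_0=|t_0|e^{-i\theta}$, $t_1=\overline{t_0}$ with $\theta\in(0,\pi/r)$ (as in (\ref{eq:t0form})), together with a single real zero $t_2$. A crucial simplification here is that the exceptional cases (1)--(2) of Lemma \ref{lem:distinctzeros} are both vacuous for $\max\{n,r\}=3$, since they would require an odd $r$ with $1<r<3$ or an odd $n$ with $1<n<3$; hence the three zeros are \emph{distinct for every} $\theta\in(0,\pi/r)$, and the partial-fraction expansion of $1/D_{n,r}$ is valid throughout. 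Extracting the coefficient of $t^m$ and combining the two conjugate terms yields the exact identity
\[
F_m(\theta)=-2\,\Re\!\left(\frac{t_0^{-m-1}}{D_t'(t_0)}\right)-\frac{t_2^{-m-1}}{D_t'(t_2)},
\]
where $D_t'$ denotes $\partial D_{n,r}/\partial t$. Writing $1/D_t'(t_0)=\rho(\theta)e^{i\beta(\theta)}$ and $t_0^{-m-1}=|t_0|^{-m-1}e^{i(m+1)\theta}$, this becomes
\[
F_m(\theta)=-2\rho(\theta)|t_0|^{-m-1}\cos\!\big((m+1)\theta+\beta(\theta)\big)-\frac{t_2^{-m-1}}{D_t'(t_2)}.
\]

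With this formula I would count sign changes of $F_m$ on $(0,\pi/r)$. By Proposition \ref{lem:zerosQ} the conjugate pair $t_0,\overline{t_0}$ are the zeros of smallest modulus, so $|t_2|>|t_0|$ and the oscillatory first term dominates. As $\theta$ traverses $(0,\pi/r)$ the phase $(m+1)\theta+\beta(\theta)$ increases through an interval of length about $(m+1)\pi/r$, so the cosine -- and hence the leading term -- changes sign at least $\lfloor m/r\rfloor$ times; at the midpoints of the associated subintervals the cosine is bounded away from $0$, so $F_m$ inherits these sign changes and acquires $\ge\lfloor m/r\rfloor$ zeros. Combined with the degree bound, every zero of $P_m$ is then real and lies in $I$. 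Density follows because these zeros are spaced $O(1/m)$ apart in $\theta$, hence fill $(0,\pi/r)$, while $z(\theta)$ is a homeomorphism onto $I$.

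The main obstacle is executing the final step \emph{for every} $m\ge0$, rather than only for $m\gg1$: the strengthening asserted in the proposition demands that the subdominant real term $t_2^{-m-1}/D_t'(t_2)$ be dominated by the oscillatory amplitude $2\rho(\theta)|t_0|^{-m-1}$ \emph{uniformly in $m$} at the relevant points. For large $m$ this is automatic from $|t_2|>|t_0|$, but for small $m$ the asymptotic separation is unavailable; here one must exploit that, for $\max\{n,r\}=3$, the quantities $\rho(\theta)$, $D_t'(t_2)$, and the ratio $|t_2/t_0|$ are all semi-explicit low-degree trigonometric expressions in $\theta$, permitting a direct comparison, and must also confirm that the phase is increasing on $(0,\pi/r)$ in each subcase so the crossings are genuine and distinct. (For the smallest $m$ there is nothing to prove: $\deg P_m=\lfloor m/r\rfloor=0$ whenever $m<r$, so $P_m$ is a nonzero constant.) I expect this explicit, all-$m$ estimate to be the technical heart of the argument.
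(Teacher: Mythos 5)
Your skeleton is the same as the paper's: reparametrize $I$ by $\theta\in(0,\pi/r)$ via Lemma \ref{lem:zfunctheta}, note that Lemma \ref{lem:distinctzeros} gives three distinct roots $t_0,\overline{t_0},t_2$ when $\max\{n,r\}=3$, expand $1/D_{n,r}$ in partial fractions, count sign changes of the resulting real function of $\theta$, and finish by the degree bound of Lemma \ref{degree} plus density of the sample points. However, there is a genuine gap exactly where you place "the technical heart": you never prove that the real-root term $t_2^{-m-1}/D_t'(t_2)$ is dominated by the oscillatory term \emph{for every} $m\ge 0$, and the route you sketch for filling it (an explicit small-$m$ trigonometric comparison of $\rho(\theta)$, $D_t'(t_2)$ and $|t_2/t_0|$, plus a monotonicity check on the phase $\beta(\theta)$) is both unexecuted and unnecessary. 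The difficulty is an artifact of your normalization: by splitting off $|t_0|^{-m-1}$, an amplitude $\rho(\theta)$, and a $\theta$-dependent phase $\beta(\theta)$, you are forced to compare two prefactors that look unrelated.

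The paper's resolution is to normalize multiplicatively before taking real parts: divide the coefficient identity by $t_0^{m+3}$ and rotate by $e^{(m+3)i\theta}$, i.e.\ work with the real number $\zeta_2=(t_2/t_0)e^{-i\theta}$. Then $P_m(z(\theta))=0$ becomes, exactly,
\[
R_{m}(\theta)=\frac{(\cos\theta-\zeta_{2})\sin(m+1)\theta}{\sin\theta}+\cos(m+1)\theta+\frac{1}{\zeta_{2}^{m+1}}=0,
\]
as in (\ref{eq:Pthethacubic}). Sampling at $\theta_h=h\pi/(m+1)$ (not at midpoints where a cosine is merely "bounded away from $0$", but at points where the competing sine term vanishes identically) gives $R_m(\theta_h)=(-1)^h+\zeta_2^{-(m+1)}$: the main contribution has amplitude exactly $1$, while the error has modulus $|\zeta_2|^{-(m+1)}\le|\zeta_2|^{-1}<1$ for \emph{all} $m\ge 0$ by Proposition \ref{lem:zerosQ}. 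Thus the sign alternation, the Intermediate Value Theorem count of $\lfloor m/r\rfloor$ zeros, and the conclusion for every $m$ follow with no asymptotics and no case analysis in $m$; the hidden algebraic relations among your $\rho(\theta)$, $D_t'(t_2)$, $|t_0|$ and $|t_2|$ are precisely what this rescaling makes visible. Until you either carry out your proposed explicit comparison or adopt this normalization, the proposition is proved only for $m\gg 1$, which is the content of Theorem \ref{maintheorem}, not of the strengthened statement you are asked to prove.
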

\begin{proof}
Suppose $\theta\in(0,\pi/r)$ and let $z\in I$ be the point so that
$t_{0}=|t_{0}|e^{-i\theta},\,t_{1}=t_{0}e^{2i\theta}$ and $t_{2}\in\mathbb{R}$
are the zeros of $D_{n,r}(t,z)$. Note that by Lemma \ref{lem:distinctzeros},
$t_{0},t_{1}$ and $t_{2}$ are distinct. Thus by partial fraction
decomposition we obtain 
\begin{eqnarray*}
\frac{1}{D_{n,r}(t,z)} & = & -\frac{1}{(t-t_{0})(t_{0}-t_{1})(t_{0}-t_{2})}-\frac{1}{(t-t_{1})(t_{1}-t_{0})(t_{1}-t_{2})}-\frac{1}{(t-t_{2})(t_{2}-t_{0})(t_{2}-t_{1})}\\
 & = & \sum_{m=0}^{\infty}\left(\frac{1}{(t_{0}-t_{1})(t_{0}-t_{2})}\frac{1}{t_{0}^{m+1}}+\frac{1}{(t_{1}-t_{0})(t_{1}-t_{2})}\frac{1}{t_{1}^{m+1}}+\frac{1}{(t_{2}-t_{0})(t_{2}-t_{1})}\frac{1}{t_{2}^{m+1}}\right)t^{m},
\end{eqnarray*}
and consequently, for all $m\geq0$, 
\begin{equation}
P_{m}(z)=\frac{1}{t_{0}^{m+1}(t_{0}-t_{1})(t_{0}-t_{2})}+\frac{1}{t_{1}^{m+1}(t_{1}-t_{0})(t_{1}-t_{2})}+\frac{1}{t_{2}^{m+1}(t_{2}-t_{0})(t_{2}-t_{1})}.\label{eq:pmcubic}
\end{equation}
We set $q_{1}=t_{1}/t_{0}$, $q_{2}=t_{2}/t_{0}$, and divide the
right hand side of (\ref{eq:pmcubic}) by $t_{0}^{m+3}$ to conclude
that $z$ is a zero of $P_{m}(z)$ if and only if 
\begin{equation}
\frac{1}{(1-q_{1})(1-q_{2})}+\frac{1}{q_{1}^{m+1}(q_{1}-1)(q_{1}-q_{2})}+\frac{1}{q_{2}^{m+1}(q_{2}-1)(q_{2}-q_{1})}=0.\label{eq:qexpansion}
\end{equation}
By multiplying the left hand side of (\ref{eq:qexpansion}) by $e^{(m+3)i\theta}$
we obtain the equivalent equation 
\[
-\frac{1}{2i\sin\theta e^{-i(m+1)\theta}(e^{-i\theta}-\zeta_{2})}+\frac{1}{2i\sin\theta e^{i(m+1)\theta}(e^{i\theta}-\zeta_{2})}+\frac{1}{\zeta_{2}^{m+1}(\zeta_{2}^{2}-2\cos\theta\zeta_{2}+1)}=0,
\]
where $\zeta_{2}:=q_{2}e^{-i\theta}\in\mathbb{R}$ since $t_{2}\in\mathbb{R}$.
Combining the first two summands and factoring the expression ${\displaystyle {\frac{1}{\zeta_{2}^{2}-2\zeta_{2}\cos\theta+1}}}$
establishes that equation (\ref{eq:qexpansion}) is equivalent to
\begin{equation}
\frac{\sin(m+2)\theta-\zeta_{2}\sin(m+1)\theta}{\sin\theta}+\frac{1}{\zeta_{2}^{m+1}}=0.\label{eq:thetacrit}
\end{equation}
Finally, using the identity $\sin(m+2)\theta=\sin(m+1)\theta\cos\theta+\cos(m+1)\theta\sin\theta$,
we see that equation (\ref{eq:thetacrit}) holds if and only if $\theta$
is a zero of 
\begin{equation}
R_{m}(\theta)=\frac{(\cos\theta-\zeta_{2})\sin(m+1)\theta}{\sin\theta}+\cos(m+1)\theta+\frac{1}{\zeta_{2}^{m+1}}.\label{eq:Pthethacubic}
\end{equation}
According to Proposition \ref{lem:zerosQ}, $|\zeta_{2}|>1$, and
hence the sign of $R_{m}(\theta)$ alternates at values of $\theta$
for which $\cos(m+1)\theta=\pm1$. Thus, by the Intermediate Value
Theorem, there are at least $\left\lfloor m/r\right\rfloor $ zeros
$\theta$ of $R_{m}(\theta)$ on $(0,\pi/r)$. Lemma \ref{lem:zfunctheta}
in turn implies that each of these zeros $\theta$ yields an distinct
zero $z(\theta)$ of $P_{m}(z)$ on $I$. Since the degree of $P_{m}(z)$
is $\left\lfloor m/r\right\rfloor $, we see that $\mathcal{Z}(P_{m})\subset I$
for all $m\geq0$. Finally, note that the set of solutions to $\cos(m+1)\theta=\pm1$,
$m\geq0$, is dense in $(0,\pi/r)$, which immediately implies that
${\displaystyle {\bigcup_{m=0}^{\infty}\mathcal{Z}(P_{m})}}$ is dense
in $I$, and completes the proof. 
\end{proof}
We continue with some remarks regarding Theorem \ref{maintheorem}
in the case when $\max\{n,r\}>3$ and $m$ is large. If the the zeros
$t_{k}$, $0\le k<\max\{n,r\}$, of $D_{n,r}(t,z)$ are distinct\footnote{Recall that the zeros of $D_{n,r}(t,z)$ and those of $Q(\zeta)$
are scaled copies of each other. Hence, if one has distinct zeros,
so does the other.}, then the partial fraction decomposition of ${\displaystyle {\frac{1}{D_{n,r}(t,z)}}}$
gives 
\begin{eqnarray*}
\sum_{m=0}^{\infty}P_{m}(z)t^{m} & = & \pm\prod_{k=1}^{\max(n,r)-1}\frac{1}{t-t_{k}}\\
 & = & \pm\sum_{k=0}^{\max(n,r)-1}\frac{1}{t-t_{k}}\prod_{l\ne k}\frac{1}{t_{k}-t_{l}}\\
 & = & \pm\sum_{m=0}^{\infty}\left(\sum_{k=0}^{\max(n,r)-1}\frac{1}{t_{k}^{m+1}}\prod_{l\ne k}\frac{1}{t_{k}-t_{l}}\right)t^{m}.
\end{eqnarray*}
Equating coefficients of powers of $t$ in these formal power series
shows that for any $m\geq0$, the equation $P_{m}(z)=0$ is equivalent
to 
\begin{equation}
\sum_{k=0}^{\max(n,r)-1}\frac{1}{t_{k}^{m+1}}\prod_{l\ne k}\frac{1}{t_{k}-t_{l}}=0.\label{eq:Pquotients}
\end{equation}
We multiply equation (\ref{eq:Pquotients}) by $t_{0}^{m+\max\{n,r\}}$
and by $e^{(m+\max\{n,r\})i\theta}$ and conclude that $z$ is a zero
of $P_{m}(z)$ if and only if $\theta$ is a zero of 
\begin{eqnarray}
R_{m}(\theta) & := & \sum_{k=0}^{\max(n,r)-1}\frac{1}{\zeta_{k}^{m+1}}\prod_{l\ne k}\frac{1}{\zeta_{k}-\zeta_{l}}\nonumber \\
 & = & \sum_{k=0}^{\max\{n,r\}-1}\frac{1}{\zeta_{k}^{m+1}Q'(\zeta_{k})},\label{eq:Ptheta}
\end{eqnarray}
where $\zeta_{k}=e^{-i\theta}t_{k}/t_{0}$, $0\le k<\max\{n,r\}-1$,
are the roots of 
\[
Q(\zeta)=\left(\frac{\sin(\phi-\theta)}{\sin\theta}-\zeta\frac{\sin\phi}{\sin\theta}\right)^{n}+\zeta^{r}
\]
with $\phi=((n-1)\pi+r\theta)/n$. Using symmetric reduction we conclude
that $R_{m}(\theta)$ is a real valued continuous function of $\theta$
on $(0,\pi/r)$. \\
 \indent If $D_{n,r}(t,z)$ has zeros of multiplicity greater than
one, then so does $Q(\zeta)$. Thus we must be in either case (1)
or (2) in Lemma \ref{lem:distinctzeros}, and hence there exists a
unique $\theta^{*}\in(0,\pi/r)$ corresponding to the double zero
$\zeta_{*}$ of $Q(\zeta)$. It is clear that $R_{m}(\theta)$ has
a singularity at $\theta^{*}$. We claim that this singularity is
removable. Indeed, if $\zeta_{a}=\zeta_{b}=\zeta_{*}$ for some $0\le a<b<\max\{n,r\}-1$,
then the sum of the two terms of $R_{m}(\theta)$ involving both $\zeta_{a}$
and $\zeta_{b}$ is equal to 
\[
\frac{m+1}{\zeta_{a}^{m+2}}\prod_{l\neq a,b}\frac{1}{\zeta_{a}-\zeta_{l}}.
\]
Thus we may consider $R(\theta)$ to be a real valued continuous function
of $\theta$ on $(0,\pi/r)$, regardless of whether or not the zeros
of $D_{n,r}(t,z)$ are distinct. \\
 \indent Recall that the magnitude estimates of Proposition \ref{lem:zerosQ}
allowed us to dispense with the term in $R_{m}(\theta)$ corresponding
to the third zero of $Q(\zeta)$ in Proposition \ref{illustration}.
In order to make a similar approach work in the general case, we will
need the following calculations concerning the dominating terms in
$R_{m}(\theta)$.\\
 Assume that $\theta\in(0,\pi/r)$, set ${\displaystyle {\phi=\frac{(n-1)\pi+r\theta}{n}}}$,
and recall that $\zeta_{0,1}=e^{\mp i\theta}$ are the two trivial
zeros of $Q(\zeta)$ of minimal magnitude. Using the identity 
\[
\sin(\phi-\theta)-e^{i\theta}\sin\phi=-\sin\theta e^{i\phi},
\]
we rewrite the expression for $Q'(\zeta)$ in (\ref{eq:Qprime}) as
\begin{eqnarray*}
Q'(e^{i\theta}) & = & e^{(r-1)i\theta}\left(-n\frac{\sin\phi}{\sin\theta}e^{i(\theta-\phi)}+r\right).
\end{eqnarray*}
Thus sum of the two terms in (\ref{eq:Ptheta}) corresponding $\zeta_{0}=e^{-i\theta}$
and $\zeta_{1}=e^{i\theta}$ can be written as 
\begin{equation}
2\frac{\Re\left(e^{i(m+1)\theta}Q'(e^{i\theta})\right)}{|Q'(e^{i\theta})|^{2}}=\frac{2}{|Q'(e^{i\theta})|^{2}}\left(A\cos(m+r)\theta-B\sin(m+r)\theta\right),\label{eq:mainterm}
\end{equation}
where 
\begin{eqnarray}
A=A(\theta) & = & -n\frac{\sin\phi}{\sin\theta}\cos(\phi-\theta)+r,\label{eq:Atheta}\\
B=B(\theta) & = & n\frac{\sin\phi}{\sin\theta}\sin(\phi-\theta).\nonumber 
\end{eqnarray}
Just as we did in the proof of Proposition \ref{illustration}, we
will demonstrate that the sign of $R_{m}(\theta)$ changes at points
where $\cos(m+r)\theta=\pm1$. At such points $B(\theta)=0$, so in
order to be able to track the exact number of sign changes in $R_{m}(\theta)$,
it remains to describe the function $A(\theta)$, which we do in the
next lemma. 
\begin{lem}
\label{lem:Atheta}Let $n,r\in\mathbb{N}$ be such that $\max\{n,r\}>1$,
suppose that $\theta\in(0,\pi/r)$ and set $\phi=((n-1)\pi+r\theta)/n$.
The function 
\[
A(\theta)=-n\frac{\sin\phi}{\sin\theta}\cos(\phi-\theta)+r
\]
is strictly positive. \end{lem}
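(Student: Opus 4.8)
The plan is to clear denominators and reduce to a single trigonometric inequality, which I would then dispose of by a short case analysis keyed to the sign of $\cos(\phi-\theta)$. On $(0,\pi/r)\subset(0,\pi)$ we have $\sin\theta>0$, and since $\phi=((n-1)\pi+r\theta)/n$ ranges over $((n-1)\pi/n,\pi)$ we also have $\sin\phi>0$. Hence multiplying $A(\theta)$ by $\sin\theta/n>0$ shows that the claim $A(\theta)>0$ is equivalent to
\[
F(\theta):=\frac{r}{n}\sin\theta-\sin\phi\cos(\phi-\theta)>0.
\]
I would first note that $F(\theta)$ is precisely the numerator of the derivative computed in (\ref{eq:derivthirdfrac}), so that $A(\theta)>0$ is just the assertion that $\theta\mapsto\sin(\phi-\theta)/\sin\phi$ is strictly increasing on $(0,\pi/r)$. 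This reinterpretation is reassuring and ties the lemma to Lemma \ref{lem:zfunctheta}, but the real content is the inequality $F>0$.

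Next I would pin down the location of $\phi-\theta$. Because $\phi-\theta=((n-1)\pi+(r-n)\theta)/n$ is monotone in $\theta$ with endpoint values $(n-1)\pi/n$ and $(r-1)\pi/r$, one checks that as soon as $\min\{n,r\}\ge2$ the angle $\phi-\theta$ remains in $[\pi/2,\pi)$, so that $\cos(\phi-\theta)\le0$. In that regime the second term of $F$ is nonnegative, whence $F\ge(r/n)\sin\theta>0$ at once. This disposes of every pair with $\min\{n,r\}\ge2$ (in particular the case $n=r$), and leaves only the two boundary families $n=1$ and $r=1$, where $\cos(\phi-\theta)$ may be positive.

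For $n=1$ I would substitute $\phi=r\theta$ and $\phi-\theta=(r-1)\theta$ and use a product-to-sum identity to rewrite
\[
A(\theta)=r-\frac12-\frac12\,\frac{\sin((2r-1)\theta)}{\sin\theta};
\]
the Dirichlet-kernel bound $\sin(N\theta)/\sin\theta<N$, strict for $\theta\in(0,\pi)$ because $\theta\notin\pi\mathbb{Z}$, then yields $A(\theta)>0$ with $N=2r-1$. For $r=1$ I would set $\psi=\pi-\theta$, observe $\sin\phi=\sin(\psi/n)$ and $\cos(\phi-\theta)=\cos((n-1)\psi/n)$, and reduce the claim (again by product-to-sum) to $n\sin((n-2)\psi/n)>(n-2)\sin\psi$ on $(0,\pi)$. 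Writing $v=\psi/n$, the difference $\Phi(v)=n\sin((n-2)v)-(n-2)\sin(nv)$ vanishes at $v=0$ and has $\Phi'(v)=2n(n-2)\sin((n-1)v)\sin v>0$, so $\Phi>0$. I expect this $r=1$ family to be the main obstacle: the monotone-difference argument is genuinely strict only for $n\ge3$, so the delicate small-$n$ behaviour of $A$ as $\theta\to\pi$ is what must be monitored most carefully.
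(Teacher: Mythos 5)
Your case decomposition is essentially the paper's own: the published proof likewise begins by noting that $A(\theta)>0$ whenever $\cos(\phi-\theta)\le 0$, which disposes of all pairs with $n,r\ge 2$, and then treats $n=1$ and $r=1$ separately. Where you differ is in technique on the two boundary families. For $n=1$ the paper writes $A(\theta)\sin\theta=(r-1)\sin\theta-\cos(r\theta)\sin((r-1)\theta)$, notes this vanishes at $\theta=0$, and shows its derivative is $(2r-1)\sin(r\theta)\sin((r-1)\theta)>0$; you instead use product-to-sum and the strict Dirichlet bound $\sin((2r-1)\theta)<(2r-1)\sin\theta$, which is equally valid. For $r=1$ the paper shows $\sin\theta-n\sin\phi\cos(\phi-\theta)$ vanishes at $\theta=\pi$ and is decreasing because its derivative equals $(2-n)\sin\phi\sin(\phi-\theta)$; your substitution $\psi=\pi-\theta$ and the monotone difference $\Phi(v)=n\sin((n-2)v)-(n-2)\sin(nv)$ is the mirror image of the same idea. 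Your opening remark that $A>0$ is precisely strict monotonicity of $\sin(\phi-\theta)/\sin\phi$, i.e.\ positivity of the numerator in (\ref{eq:derivthirdfrac}), is a structural observation the paper does not make, and it ties the lemma neatly to Lemma \ref{lem:zfunctheta}.

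The caveat you flag at the end, however, is not a small-$n$ delicacy that careful monitoring can repair: the lemma as stated is \emph{false} at $(n,r)=(2,1)$. There $\phi=(\pi+\theta)/2$, $\phi-\theta=(\pi-\theta)/2$, and
\[
A(\theta)=1-2\,\frac{\sin\frac{\pi+\theta}{2}\,\cos\frac{\pi-\theta}{2}}{\sin\theta}
=1-\frac{2\cos\frac{\theta}{2}\,\sin\frac{\theta}{2}}{\sin\theta}\equiv 0
\quad\text{on }(0,\pi).
\]
Your failure mode ($\Phi'\equiv 0$ when $n=2$) and the paper's are the same phenomenon: the published proof asserts $(2-n)\sin\phi\sin(\phi-\theta)<0$, which is wrong for $n=2$, where that derivative vanishes identically and the quantity being estimated is identically zero rather than positive. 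So your argument is exactly as strong as the paper's — both prove strict positivity for every admissible pair except $(2,1)$, where no proof can exist. The honest fix is to exclude $(n,r)=(2,1)$ (e.g.\ require $\max\{n,r\}\ge 3$), which costs nothing downstream: the only place $A(\theta)>0$ is used is Proposition \ref{prop:signchangegeneral}, and the case $\max\{n,r\}=2$ of Theorem \ref{maintheorem} is anyway deferred to Theorem 1 of \cite{tran}.
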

\begin{proof}
If $\cos(\phi-\theta)\le0$ then $A(\theta)>0$ since $\sin\phi>0$
and $\sin\theta>0$ on the interval under consideration. Consequently,
if $r\ge2$ and $n\ge2$, then $A(\theta)>0$.\\
 If $r=1$, then $A(\theta)>0$ if and only if 
\[
\sin\theta-n\sin\phi\cos(\phi-\theta)>0.
\]
Note that the left side of the above inequality is $0$ when $\theta=\pi$,
and it is a decreasing function of $\theta$ on $(0,\pi)$ because
its derivative is negative there: 
\begin{eqnarray*}
 &  & \frac{d}{d\theta}\left(\sin\theta-n\sin\phi\cos(\phi-\theta)\right)\\
 & = & \cos\theta-\cos\phi\cos(\phi-\theta)+n\sin\phi\sin(\phi-\theta)\left(\frac{1}{n}-1\right)\\
 & = & (2-n)\sin\phi\sin(\phi-\theta)\\
 & < & 0.
\end{eqnarray*}
It follows that if $r=1$, then $A(\theta)>0$.\\
 Finally, if $n=1$, then $\phi=r\theta$, and 
\begin{eqnarray*}
A(\theta)\sin\theta & = & r\sin\theta-\sin\phi\cos(\phi-\theta)\\
 & = & (r-1)\sin\theta-\cos\phi\sin(\phi-\theta).
\end{eqnarray*}
Since $A(0)\sin(0)=0$, and 
\[
\frac{d}{d\theta}(A(\theta)\sin(\theta))=(2r-1)\sin(r\theta)\sin((r-1)\theta)>0
\]
for $\theta\in(0,\pi/r)$, we conclude that $A(\theta)>0$ in this
case as well. 
\end{proof}
We are now in position to describe the sign changes of $R_{m}(\theta)$
on the interval $(0,\pi/r)$. 
\begin{prop}
\label{prop:signchangegeneral} Suppose that $n,r\in\mathbb{N}$ are
such that $\max\{n,r\}>1$, and let $R_{m}(\theta)$ be defined as
in equation (\ref{eq:Ptheta}) for $m\geq0$. If ${\displaystyle {\theta_{h}=\frac{h\pi}{m+r},(h=1,\ldots,\left\lfloor m/r\right\rfloor ),}}$
denote the values of $\theta$ in $(0,\pi/r)$ which give $\cos(m+r)\theta=\pm1$,
then 
\begin{itemize}
\item[(i)] $\textrm{sgn}\left(R_{m}(\theta_{h})\right)=(-1)^{h}$, and 
\item[(ii)] $\textrm{sgn}\left(R_{m}(\pi/r^{-})\right)=(-1)^{\left\lfloor m/r\right\rfloor +1}$. 
\end{itemize}
for all $m$ sufficiently large. \end{prop}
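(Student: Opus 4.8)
The plan is to split $R_m(\theta)=M_m(\theta)+E_m(\theta)$, where $M_m$ is the contribution of the two unit-modulus zeros $\zeta_{0,1}=e^{\mp i\theta}$, namely the quantity in (\ref{eq:mainterm}),
\[
M_m(\theta)=\frac{2}{|Q'(e^{i\theta})|^{2}}\bigl(A(\theta)\cos(m+r)\theta-B(\theta)\sin(m+r)\theta\bigr),
\]
and $E_m(\theta)=\sum_{k\ge2}\bigl(\zeta_k^{m+1}Q'(\zeta_k)\bigr)^{-1}$ collects the remaining zeros. At a node $\theta_h=h\pi/(m+r)$ one has $(m+r)\theta_h=h\pi$, so $\sin(m+r)\theta_h=0$, $\cos(m+r)\theta_h=(-1)^h$, and therefore $M_m(\theta_h)=2(-1)^hA(\theta_h)/|Q'(e^{i\theta_h})|^{2}$, whose sign is $(-1)^h$ by Lemma \ref{lem:Atheta}. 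Hence (i) reduces to showing $|E_m(\theta_h)|<|M_m(\theta_h)|$ for large $m$. For this I would invoke Proposition \ref{lem:zerosQ}: each $\zeta_k$ with $k\ge2$ has $|\zeta_k|>1$, so every term of $E_m$ carries a factor $|\zeta_k|^{-(m+1)}$; on any compact $[\varepsilon,\pi/r-\varepsilon]$ these moduli exceed a common $\rho>1$, while $A/|Q'|^2$ and the $|Q'(\zeta_k)|$ are bounded below, so $|E_m|/|M_m|=O(\rho^{-m})$ uniformly and (i) holds at every node there.

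I expect the real difficulty to be the endpoints, since $\theta_1$ and $\theta_{\lfloor m/r\rfloor}$ migrate into $0$ and $\pi/r$ as $m\to\infty$, and there the non-principal zeros approach the unit circle: as $\theta\to0$ the zeros near $\zeta=1$ coalesce, as $\theta\to\pi/r$ with $r\ge2$ they tend to the $r$-th roots of $-1$, and when $n=1$ or $r=1$ even $\zeta_0,\zeta_1$ collide. The factors $|\zeta_k|^{-(m+1)}$ then fail to decay uniformly and $M_m,E_m$ vanish at the same rate, so the clean comparison above breaks down. I would resolve this with a matched local expansion: rescaling $\theta=\pi\xi/(m+r)$ (so $\xi\approx h$ stays bounded) and expanding $A,B,|Q'|^2$ and each $\zeta_k$ to leading order in $\theta$ turns $|E_m(\theta_h)|<|M_m(\theta_h)|$ into a comparison of explicit constants depending only on $n,r,h$; a symmetric rescaling handles the approach to $\pi/r$. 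This endpoint analysis is where I expect the bulk of the technical work to lie.

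For (ii) I would use the identity $R_m(\theta)=-[\zeta^m]\,Q(\zeta)^{-1}$, obtained by summing the residues of $\zeta^{-(m+1)}Q(\zeta)^{-1}$, whose only other singularity is the order-$(m+1)$ pole at the origin. As $\theta\to\pi/r^-$ one has $\sin\phi\to0$; for $r\ge2$ this forces $Q\to1+\zeta^{r}$, and since $[\zeta^m](1+\zeta^r)^{-1}=(-1)^{\lfloor m/r\rfloor}$ when $r\mid m$, the sign is $(-1)^{\lfloor m/r\rfloor+1}$, while for $r=1$ the limiting $Q$ has a double zero at $-1$, so $[\zeta^m]Q^{-1}$ grows like $m(-1)^{m}$ with the same resulting sign. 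The one awkward residue class is $r\nmid m$ with $r\ge2$, where this leading value vanishes; there I would instead read off the sign at the node $\theta_{\lfloor m/r\rfloor+1}\in(\theta_{\lfloor m/r\rfloor},\pi/r)$, at which $\cos(m+r)\theta=(-1)^{\lfloor m/r\rfloor+1}$, using the local comparison of the previous paragraph to guarantee that $M_m$ still dominates.
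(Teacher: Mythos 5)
Your decomposition $R_m=M_m+E_m$ and your treatment of nodes on compact subsets $[\gamma,\pi/r-\gamma]$ coincide with the paper's Case 1 (Proposition \ref{lem:zerosQ} for the uniform gap $|\zeta_k|>1+\epsilon$, Lemma \ref{lem:Atheta} for $A>0$), and that part is correct. The genuine gap is at the endpoints, exactly where you predicted the work would be: the inequality you propose to prove there, $|E_m(\theta_h)|<|M_m(\theta_h)|$, is \emph{false} in part of the parameter range, so no local expansion can establish it. Concretely, take $h$ fixed and $n$ large (the end $\theta\to 0$; the end $\theta\to\pi/r$ is analogous with $r$ in place of $n$ and $m/r-h+1$ in place of $h$). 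When $\theta_h<\delta/\sqrt{m}$, all $n$ zeros of $Q$ cluster at $1$, namely $\zeta_k=1+\theta(\cos(\pi/n)-\omega_k)/\sin(\pi/n)+\mathcal{O}(\theta^2)$ with $\omega_k=e^{(2k-1)\pi i/n}$, and all $n$ corresponding terms of $R_m(\theta_h)$ have the same polynomial order in $\theta_h$. After removing common positive factors, the sign of $R_m(\theta_h)$ is that of $\sum_{k=0}^{n-1}\omega_k e^{hn\omega_k}$ (for large $n$); the $k=0,1$ part of this sum --- your $M_m$ --- is approximately $2(-1)^h\cos(\pi/n)e^{hn\cos(\pi/n)}$, of magnitude at most $2e^{hn\cos(\pi/n)}$. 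But expanding the full sum as the paper does in (\ref{eq:expseries}) gives $n\sum_{j\geq1}(-1)^j(hn)^{jn-1}/((jn-1)!)$, which is dominated by its $j=h$ term, hence has magnitude about $n(hn)^{hn-1}/((hn-1)!)\approx\sqrt{n/(2\pi h)}\,e^{hn}$ by Stirling. For $n\gg h$ this exceeds $|M_m|$ by a factor of order $\sqrt{n/h}$, so in this regime $|E_m|>|M_m|$: the sign $(-1)^h$ does not come from domination by the two principal zeros, but from cancellation among all $n$ local terms, the dominant contribution being the single alternating-series term $(-1)^h(hn)^{hn-1}/((hn-1)!)$. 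Supplying this cancellation argument is precisely the paper's Lemma \ref{aestimates}, which needs a computer verification for $n\leq85$ and, for $n>85$, the estimates $|a_{h-1}|+|a_{h+1}|<|a_h|$ via the bounds on $P_1$ and $P_2$. Note also that your phrase ``a comparison of explicit constants depending only on $n,r,h$'' cannot mean a finite check, since $n$, $r$ and $h$ are unbounded; a uniform argument of this kind is the technical core of the proposition, and it is absent from your proposal.

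On (ii): your identity $R_m(\theta)=-[\zeta^m]\,Q_\theta(\zeta)^{-1}$ (sum of residues of $\zeta^{-m-1}/Q_\theta(\zeta)$) is correct and is a genuinely cleaner route than the paper's; it settles the cases $r\mid m$ and $r=1$ directly, provided you also check that the coefficient of the double pole of $Q_\theta^{-1}$ at $\zeta=-1$ is positive in the $r=1$ case. However, when $r\nmid m$ and $r\geq2$ the limit is exactly $0$, so (ii) must be read as the eventual sign of $R_m(\theta)$ as $\theta\uparrow\pi/r$, and your fallback --- reading the sign at the extra node $\theta_{\lfloor m/r\rfloor+1}$ --- while sufficient for the sign-change count in Theorem \ref{maintheorem}, leans on the endpoint comparison $|E_m|<|M_m|$, which fails when $r$ is large by the argument above. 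Thus both halves of your plan ultimately reduce to the missing uniform cancellation lemma.
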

\begin{proof}
The proof is broken into three cases as dictated by the asymptotic
behavior of the expressions $\theta_{h}$ as $m$ goes to infinity.
Some of these will stay bounded away from both zero and $\pi/r$,
some will approach $0$, and some will tend to $\pi/r$.

\subsection*{Case 1: $\gamma<\theta<\pi/r-\gamma$ for some small fixed $\gamma$
independent of $m$}

Proposition \ref{lem:zerosQ} implies that if $2\le k<\max(n,r)$,
then $|\zeta_{k}|>1+\epsilon$ for some fixed $\epsilon>0$. Using
Lemma \ref{lem:Atheta} and equation (\ref{eq:mainterm}) we write
\[
R_{m}(\theta)=\frac{2}{|Q'(e^{i\theta})|^{2}}\left(A\cos(m+r)\theta-B\sin(m+r)\theta\right)+\sum_{k=2}^{\max(n,r)-1}\frac{1}{\zeta_{k}^{m+1}Q'(\zeta_{k})},
\]
where the $A(\theta)>0$ and the sum approaches $0$ exponentially
fast when $m\rightarrow\infty$. We invoke Lemma \ref{lem:Atheta}
to conclude that the sign of $R_{m}(\theta_{h})$ is $(-1)^{h}$.

\subsection*{Case 2: $\theta\rightarrow0$ as $m\rightarrow\infty$}

If $n=1$, the zeros $e^{\pm i\theta}$ of $Q(\zeta)$ converge to
$1$, while its remaining zeros converge to those zeros of $T(\zeta)=r-1-r\zeta+\zeta^{r}$
which are greater than one in magnitude\footnote{ $T(\zeta)/(\zeta-1)=\sum_{j=1}^{r-1}\zeta^{j}-(r-1)$ has exactly
one zero in the closed unit disk at $\zeta=1$.}, and hence are uniformly separated from the closed unit disk. Thus,
following an argument similar to the one in Case 1, we see that the
sum in 
\[
R_{m}(\theta)=\frac{2}{|Q'(e^{i\theta})|^{2}}\left(A\cos(m+r)\theta-B\sin(m+r)\theta\right)+\sum_{k=2}^{\max(n,r)-1}\frac{1}{\zeta_{k}^{m+1}Q'(\zeta_{k})},
\]
approaches $0$ exponentially fast as $m\rightarrow\infty$. We
next calculate 
\begin{eqnarray*}
A(\theta) & = & r-\frac{\sin(r\theta)\cos((r-1)\theta)}{\sin\theta}\\
 & = & \left(\frac{r}{3}-r^{2}+\frac{2r^{3}}{3}\right)\theta^{2}+\mathcal{O}(\theta^{4}),
\end{eqnarray*}
and deduce that $A(\theta)\to0$ at a polynomial rate. Consequently,
the sign of $R_{m}(\theta_{h})$ is $(-1)^{h}$ for $m\gg1$.

On the other hand, if $n\ne1$, then $Q(\zeta)$ has $n$ zeros in
a neighborhood of $1$, and the possible remaining $\max\{n,r\}-n$
zeros lie outside the closed unit disk. Assume without loss of generality
that $\zeta_{k}$ is in a neighborhood of $1$ for $0\le k<n$. For
the same reason as in Case 1, the sum 
\[
\sum_{k=n}^{\max(r,n)-1}\frac{1}{\zeta_{k}^{m+1}Q'(\zeta_{k})}
\]
is either $0$, or it approaches $0$ exponentially fast. We next consider
the sum 
\[
\sum_{k=0}^{n-1}\frac{1}{\zeta_{k}^{m+1}Q'(\zeta_{k})}.
\]
Set $\zeta=1+\epsilon$, $\epsilon\in\mathbb{C}$, and note that 
\begin{eqnarray}
\sin\left(\phi-\theta\right)-\zeta\sin\phi & = & \sin\frac{\pi+(n-r)\theta}{n}-\zeta\sin\frac{\pi-r\theta}{n}\nonumber \\
 & = & \sin\frac{\pi}{n}+\left(\cos\frac{\pi}{n}\right)\left(\frac{(n-r)\theta}{n}\right)-(1+\epsilon)\sin\frac{\pi}{n}+(1+\epsilon)\left(\cos\frac{\pi}{n}\right)\left(\frac{r\theta}{n}\right)+\mathcal{O}\left(\theta^{2}\right)\nonumber \\
 & = & \theta\cos\frac{\pi}{n}+\frac{r\theta\epsilon}{n}\cos\frac{\pi}{n}-\epsilon\sin\frac{\pi}{n}+\mathcal{O}\left(\theta^{2}\right).\label{eq:sinediff}
\end{eqnarray}
Thus, if $\zeta$ is a zero of $Q(\zeta)$, then 
\[
0=\left(\theta\cos\frac{\pi}{n}+\frac{r\theta\epsilon}{n}\cos\frac{\pi}{n}-\epsilon\sin\frac{\pi}{n}+\mathcal{O}(\theta^{2})\right)^{n}+\theta^{n}(1+\epsilon)^{r}(1+\mathcal{O}(\theta)).
\]
We deduce that 
\[
\theta\cos\frac{\pi}{n}+\frac{r\theta\epsilon}{n}\cos\frac{\pi}{n}-\epsilon\sin\frac{\pi}{n}=\omega\theta\left(1+\epsilon\right)^{r/n}\left(1+\mathcal{O}\left(\theta\right)\right),
\]
where $\omega$ is an $n$-th root of $-1$. Expanding $(1+\epsilon)^{n/r}$
and solving for $\epsilon$ yields 
\begin{equation}
\epsilon=\frac{\theta\left(\cos(\pi/n)-\omega\right)}{\sin(\pi/n)}+\mathcal{O}\left(\theta^{2}\right).\label{eq:epsilonform}
\end{equation}
Substituting (\ref{eq:sinediff}) and (\ref{eq:epsilonform}) into
the expression (\ref{eq:Qprime}) for $Q'(\zeta)$ we see that if
$\zeta$ is a zero of $Q(\zeta)$, then 
\begin{eqnarray}
Q'(\zeta) & = & \zeta^{r-1}\left(n\frac{\zeta\sin\phi}{\sin(\phi-\theta)-\zeta\sin\phi}+r\right)\nonumber \\
 & = & \zeta^{r-1}\left(\frac{n\zeta\sin(\pi/n)}{\theta\omega}+\mathcal{O}(1)\right).\label{eq:Qprimeasymp}
\end{eqnarray}
Thus, 
\begin{equation}
\sum_{k=0}^{n-1}\frac{1}{\zeta_{k}^{m+1}Q'(\zeta_{k})}=\frac{\theta}{n\sin(\pi/n)}\sum_{k=0}^{n-1}\frac{\omega_{k}}{\zeta_{k}^{m+r}}\left(1+\mathcal{O}(\theta)\right),\label{eq:Pasympzero}
\end{equation}
where $\omega_{k}=e^{(2k-1)\pi i/n}$, $0\le k<n$.

If $\theta\geq\delta/\sqrt{m}$ for some small $\delta$, then equation
(\ref{eq:epsilonform}) gives 
\begin{eqnarray*}
|\zeta_{k}| & \geq & \left|1+\frac{\theta(\cos(\pi/n)-\omega_{k})}{\sin(\pi/n)}\right|\\
 & > & \left|1+\frac{\delta\left(\cos(\pi/n)-\cos\left((2k-1)\pi/n\right)\right)}{2\sqrt{m}\sin(\pi/n)}\right|.
\end{eqnarray*}
From this inequality we deduce that when $m$ is large, 
\[
\sum_{k=2}^{n-1}\frac{1}{\zeta_{k}^{m+1}Q'(\zeta_{k})}
\]
approaches $0$ faster than the polynomial decay of $\frac{2}{|Q'(e^{i\theta})|^{2}}\left(A\cos(m+r)\theta-B\sin(m+r)\theta\right)$.
Thus the sign of $R_{m}(\theta_{h})$ is again $(-1)^{h}$ for $m\gg1$.\\
 On the other hand, if $\theta<\delta/\sqrt{m}$ for $\delta\ll1$,
then 
\begin{eqnarray*}
\zeta_{k}^{m+r} & = & \left(1+\frac{\theta(\cos(\pi/n)-\omega_{k})}{\sin(\pi/n)}+\mathcal{O}(\theta^{2})\right)^{m+r}\\
 & = & \exp\left(\frac{\cos(\pi/n)-\omega_{k}}{\sin(\pi/n)}h\pi\right)\left(1+\mathcal{O}\left(\frac{h^{2}}{m}\right)\right).
\end{eqnarray*}
We select $\delta$ small enough so that the sign of $R_{m}(\theta_{h})$
is the same as the sign of 
\[
\sum_{k=0}^{n-1}\omega_{k}\exp^{-1}\left(\frac{\cos(\pi/n)-\omega_{k}}{\sin(\pi/n)}h\pi\right),
\]
which in turn is determined by the sign of 
\[
\sum_{k=0}^{n-1}\omega_{k}\exp\left(\frac{h\pi\omega_{k}}{\sin(\pi/n)}\right),
\]
since ${\displaystyle {\exp^{-1}\left(\cos(\pi/n)/\sin(\pi/n)\right)>0}}$.
The next lemma establishes that the sign of the expression in question
is $(-1)^{h}$, thereby completing Case 2. 
\begin{lem}
\label{aestimates} Suppose that $n\geq2$. Then for al $h\in\mathbb{N}$,
the sign of the expression 
\begin{equation}
\sum_{k=0}^{n-1}\omega_{k}\exp^{-1}\left(\frac{\cos(\pi/n)-\omega_{k}}{\sin(\pi/n)}h\pi\right),\label{eq:expsum}
\end{equation}
is $(-1)^{h}$. \end{lem}
\begin{proof}
Note that the sum of the first two terms of (\ref{eq:expsum}) is
\[
2(-1)^{h}\cos\left(\frac{\pi}{n}\right),
\]
whereas the sum of the remaining $n-2$ terms is at most 
\begin{equation}
\sum_{k=2}^{n-1}\exp^{-1}\left(\frac{\cos(\pi/n)-\cos((2k-1)\pi/n)}{\sin(\pi/n)}h\pi\right),\label{eq:smallsum}
\end{equation}
a sum which is largest when $h=1$. We use a computer to check that
when $n\le85$ and $h=1$, (\ref{eq:smallsum}) is less than $|2\cos(\pi/n)|$,
hence the sign of (\ref{eq:expsum}) is $(-1)^{h}$. For $n>85$ we
bound (\ref{eq:smallsum}) as follows: 
\begin{eqnarray*}
\sum_{k=2}^{n-1}\exp^{-1}\left(\frac{\cos(\pi/n)-\cos((2k-1)\pi/n)}{\sin(\pi/n)}h\pi\right) & = & \sum_{k=2}^{n-1}\exp^{-1}\left(2\frac{h\pi}{\sin(\pi/n)}\sin\frac{k\pi}{n}\sin\frac{(k-1)\pi}{n}\right)\\
 & < & 2\sum_{k=2}^{\left\lfloor n/2\right\rfloor }\exp^{-1}\left(2h\pi\sin\frac{k\pi}{n}\right)\\
 & \stackrel{(i)}{<} & 2\sum_{k=2}^{\left\lfloor n/2\right\rfloor }\exp^{-1}\left(\frac{hk\pi^{2}}{n}\right)\\
 & \stackrel{(ii)}{<} & \frac{2}{e^{h\pi^{2}/n}(e^{h\pi^{2}/n}-1)},
\end{eqnarray*}
where inequality $(i)$ follows from the fact that $2\sin(k\pi/n)>k\pi/n$
when $k<n/2$, and inequality $(ii)$ is obtained by replacing the
partial geometric sum by the whole series. The reader will note that
the inequality 
\[
\frac{2}{e^{h\pi^{2}/n}(e^{h\pi^{2}/n}-1)}<2\cos\frac{\pi}{85}<2\cos\frac{\pi}{n}
\]
holds when $e^{h\pi^{2}/n}>1.62$, or when $n<20h$. Consequently,
when $n<20h$, 
\[
\sum_{k=2}^{n-1}\omega_{k}\exp^{-1}\left(\frac{\cos(\pi/n)-\omega_{k}}{\sin(\pi/n)}h\pi\right)<|2\cos(\pi/n)|
\]
and the sign of (\ref{eq:expsum}) is $(-1)^{h}$. \\
 Consider now the case $h<n/20$. We note that $n>85$ implies that
\[
\sum_{k=0}^{n-1}\omega_{k}\exp\left(\frac{h\pi\omega_{k}}{\sin(\pi/n)}\right)\approx\sum_{k=0}^{n-1}\omega_{k}e^{hn\omega_{k}}.
\]
By expanding the exponential function in a series we obtain 
\begin{eqnarray}
\sum_{k=0}^{n-1}\omega_{k}e^{hn\omega_{k}} & = & \sum_{k=0}^{n-1}\omega_{k}\sum_{j=0}^{\infty}\frac{1}{j!}\left(hn\omega_{k}\right)^{j}\nonumber \\
 & = & \sum_{j=0}^{\infty}\frac{1}{j!}(hn)^{j}\left(\sum_{k=0}^{n-1}\omega_{k}^{j+1}\right).\label{eq:expseries}
\end{eqnarray}
The identity 
\[
\sum_{k=0}^{n-1}\omega_{k}^{a}=\left\{ \begin{array}{cc}
0 & \text{if} n \nmid a\\
n(-1)^{a/n} & \text{if }n\mid a
\end{array} \right.,
\]
simplifies (\ref{eq:expseries}) to 
\[
n\sum_{j=1}^{\infty}(-1)^{j}\frac{1}{(nj-1)!}\left(hn\right)^{jn-1}=:n\sum_{j=1}^{\infty}a_{j}.
\]
It is straightforward to show that $|a_{j}|<|a_{j+1}|$ for $j=0,1,\ldots,h-1$,
and $|a_{j}|>|a_{j+1}|$ for $j=h,h+1,\ldots$. Since the sum $\sum a_{j}$
is alternating, we conclude that 
\[
\left|\sum_{j=0}^{h-1}a_{j}\right|<|a_{h-1}|,\qquad\text{and}\qquad\left|\sum_{j=h+1}^{\infty}a_{j}\right|<|a_{h+1}|.
\]
We now relate the quantities $|a_{h-1}|,|a_{h}|$ and $|a_{h+1}|$.
To this end, we compute 
\[
\left|\frac{a_{j+1}}{a_{j}}\right|=\frac{(hn)^{n}}{\prod_{\ell=0}^{n-1}nj+\ell},
\]
and consequently, 
\begin{eqnarray*}
\frac{|a_{h-1}|+|a_{h+1}|}{|a_{h}|} & = & \frac{\prod_{\ell=0}^{n-1}(n(h-1)+\ell)}{(nh)^{n}}+\frac{(hn)^{n}}{\prod_{\ell=0}^{n-1}(nh+\ell)}\\
 & = & \prod_{\ell=0}^{n-1}\left(1-\frac{n-\ell}{n}\frac{1}{h}\right)+\frac{1}{\prod_{\ell=1}^{n-1}\left(1+\frac{\ell}{nh}\right)}\\
 & = & P_{1}+\frac{1}{P_{2}}.
\end{eqnarray*}
Taking the natural logarithm of $P_{1}$ we obtain 
\begin{eqnarray*}
\ln(P_{1}) & = & \sum_{\ell=0}^{n-1}\ln\left(1-\frac{n-\ell}{n}\frac{1}{h}\right)\\
 & \stackrel{\ast}{<} & \sum_{\ell=0}^{n-1}\left(-\frac{n-\ell}{n}\frac{1}{h}\right)\\
 & = & -\frac{1}{nh}\frac{n(n+1)}{2}\\
 & = & -\frac{n+1}{2h},
\end{eqnarray*}
where the starred inequality follows from the fact that $\ln x\leq x-1$
for all $x>0$, with equality only when $x=1$. We conclude that $P_{1}<\exp(-\frac{n+1}{2h})$.
On the other hand, taking the natural logarithm of $P_{2}$ gives
\begin{eqnarray*}
\ln(P_{2}) & = & \sum_{\ell=1}^{n-1}\ln\left(1+\frac{\ell}{nh}\right)\\
 & \stackrel{\ast}{>} & \sum_{\ell=1}^{n-1}\frac{\ell}{2nh}\\
 & = & \frac{1}{2nh}\frac{(n-1)n}{2}\\
 & = & \frac{n-1}{4h},
\end{eqnarray*}
where for the starred inequality we used the fact that $\ln x>\frac{x-1}{2}$
for $1<x<2$. Consequently, ${\displaystyle {P_{2}>\exp\left(\frac{n-1}{4h}\right)}}$.
Combining these two results we see that 
\[
P_{1}+\frac{1}{P_{2}}<e^{-\frac{n+1}{2h}}+e^{-\frac{n-1}{4h}}<1,
\]
which in turn implies that $|a_{h-1}|+|a_{h+1}|<|a_{h}|$, and hence
\[
\left|\sum_{j\neq h}a_{j}\right|<|a_{h}|
\]
when $h<n/20$ and $n>85$. Thus, the sign of the expression (\ref{eq:expsum})
is $(-1)^{h}$ in this case as well. 
\end{proof}

\subsection*{Case 3: $\theta\rightarrow\pi/r$ as $m\rightarrow\infty$}

When $\theta\rightarrow\pi/r$ and $r=1$, we observe that with $R(\zeta)$
as in (\ref{eq:Qzetatransf}), the polynomial 
\[
\frac{R(\zeta)}{\sin\theta}=(-\omega\zeta-\cos\phi)-\frac{\sin\phi}{\sin\theta}(\zeta^{n}-\cos\theta)
\]
converges locally uniformly to 
\[
\widetilde{T}(\zeta)=(1-\omega\zeta)-\frac{1}{n}(1+\zeta^{n}).
\]
A quick calculation shows that 
\begin{equation}\label{eq:ttransform}
n \widetilde{T}(\zeta) \big|_{\omega \zeta \to \zeta}=T(\zeta),
\end{equation}
the polynomial we treated at the beginning of case 2. Since the transformations used in (\ref{eq:ttransform}) preserve the location of the zeros in, on, and outside the unit circle, we conclude that as $\theta\rightarrow\pi/r$, $Q(\zeta)$ converges locally
uniformly to a polynomial whose zeros lie outside the closed unit
disk besides the double zeros at $\zeta=-1$. As a result, the sign
of $R_{m}(\theta_{h})$ is again determined by the sign of $\cos(m+r)\theta_{h}$,
and is hence equal to $(-1)^{h}$. \\
 When $r>1$, the polynomial $Q(\zeta)$ has $r$ zeros approaching
the $r$-th roots of $-1$, with the possible remaining $n-r$ zeros
(when $n>r)$ tending to $\infty$. We thus consider the sum 
\[
\sum_{k=0}^{r-1}\frac{1}{\zeta_{k}^{m+1}Q'(\zeta_{k})}
\]
where each $\zeta_{k}$, $0\le k<r$, approaches an $r$-th root of
$-1$. We set $e_{k}=e^{(2k-1)\pi i/r}$ and $\zeta_{k}=e_{k}+\epsilon$,
for some $\epsilon\in\mathbb{C}$, and write 
\[
\eta=\frac{\pi}{r}-\theta=\frac{\pi}{r}-\frac{h\pi}{m+r}=\frac{m+r-hr}{r(m+r)}\pi.
\]
We compute the difference 
\begin{eqnarray*}
\sin\left(\phi-\theta\right)-\zeta\sin\phi & = & \sin\frac{\pi+(n-r)\theta}{n}-\zeta\sin\frac{\pi-r\theta}{n}\\
 & = & \sin\frac{\pi}{r}+\frac{(r-n)\eta}{n}\cos\frac{\pi}{r}-\zeta\frac{r\eta}{n}+\mathcal{O}(\eta^{2}),
\end{eqnarray*}
and use it to rewrite the equation $Q(\zeta_{k})=0$ as 
\begin{eqnarray*}
0 & = & \left(\sin\frac{\pi}{r}+\frac{(r-n)\eta}{n}\cos\frac{\pi}{r}-(e_{k}+\epsilon)\frac{r\eta}{n}\right)^{n}+(\sin\frac{\pi}{r}-\eta\cos\frac{\pi}{r})^{n}(e_{k}+\epsilon)^{r}+\mathcal{O}(\eta^{2})\\
 & = & \left(\sin\frac{\pi}{r}+\frac{(r-n)\eta}{n}\cos\frac{\pi}{r}-e_{k}\frac{r\eta}{n}\right)^{n}-(\sin\frac{\pi}{r}-\eta\cos\frac{\pi}{r})^{n}\left(1+\frac{r\epsilon}{e_{k}}\right)+\mathcal{O}(\eta^{2}+\epsilon^{2}).
\end{eqnarray*}
Taking $n$-th roots yields 
\[
\frac{(r-n)\eta}{n}\cos\frac{\pi}{r}-e_{k}\frac{r\eta}{n}=\frac{r\epsilon}{ne_{k}}\sin\frac{\pi}{r}-\eta\cos\frac{\pi}{r}\left(1+\frac{r\epsilon}{ne_{k}}\right)+\mathcal{O}(\eta^{2}+\epsilon^{2}),
\]
and consequently 
\begin{equation}
\epsilon=\frac{e_{k}(\cos(\pi/r)-e_{k})\eta}{\sin(\pi/r)-\eta\cos(\pi/r)}+\mathcal{O}\left(\eta^{2}\right)=\frac{e_{k}(\cos(\pi/r)-e_{k})\eta}{\sin(\pi/r)}+\mathcal{O}(\eta^{2}).\label{eq:epsilonpi/r}
\end{equation}
From equation (\ref{eq:Qprime}) we also obtain 
\begin{eqnarray*}
Q'(\zeta_{k}) & = & \zeta_{k}^{r-1}\left(n\frac{\zeta_{k}\sin\phi}{\sin(\phi-\theta)-\zeta\sin\phi}+r\right)\\
 & = & r\zeta_{k}^{r-1}+\mathcal{O}(\eta)
\end{eqnarray*}
which we substitute into the sum under consideration to obtain 
\begin{equation}
\sum_{k=0}^{r-1}\frac{1}{\zeta_{k}^{m+1}Q'(\zeta_{k})}=\frac{1}{r}\sum_{k=0}^{r-1}\frac{1+\mathcal{O}(\eta)}{\zeta_{k}^{m+r}}.\label{eq:Pasymppi/r}
\end{equation}
For the same reasons as in Case 2, it suffices to consider $\eta<\delta/\sqrt{m}$
for small $\delta$. In this case we have the approximation 
\begin{eqnarray*}
\zeta_{k}^{m+r} & = & e_{k}^{m+r}\left(1+\frac{\cos(\pi/r)-e_{k}}{\sin(\pi/r)}\eta+\mathcal{O}(\eta^{2})\right)^{m+r}\\
 & = & e_{k}^{m+r}\exp\left(\frac{\cos(\pi/r)-e_{k}}{\sin(\pi/r)}(m+r)\eta\right)\left(1+\mathcal{O}\left(m\eta^{2}\right)\right).
\end{eqnarray*}
We choose $\delta$ small so that the sign of $R_{m}(\theta_{h})$
is the same as the sign of 
\begin{equation}
\sum_{k=0}^{r-1}e_{k}^{-m-r}\exp^{-1}\left(\frac{\cos(\pi/r)-e_{k}}{\sin(\pi/r)}\left(\frac{m}{r}-h+1\right)\pi\right).\label{eq:finalest}
\end{equation}
The sum of the first two terms in the above sum is $2(-1)^{h}$ and
hence, for reasons similar to those in Case 2 (i.e. using the same
argument with $m/r-h+1$ in place of $h$), it suffices to consider
$r>85$. For such $r$, arguments entirely analogous to those we gave
in the proof of Lemma \ref{aestimates} establish that the sign of
the sum (\ref{eq:finalest}) is $(-1)^{h}$. The determine the sign
of $R(\pi/r^{-})$, we first note that for $r>85$, the sign of (\ref{eq:finalest})
is the same as that of 
\begin{equation}
\sum_{k=0}^{r-1}e_{k}^{-m-r}e^{(m-rh+r)e_{k}}=\sum_{k=0}^{r-1}e_{k}^{-m-r}\sum_{j=0}^{\infty}\frac{1}{j!}\left((m-rh+r)e_{k}\right)^{j}.\label{eq:doublesum}
\end{equation}
If we let $m=pr+s$, $0\le s<r$, then the double summation on the
right hand side of (\ref{eq:doublesum}) can be rewritten as 
\begin{equation}
r\sum_{j=0}^{\infty}(-1)^{p+1-j}\frac{\left(m+r-rh\right)^{jr+s}}{(rj+s)!}.\label{eq:signpioverr}
\end{equation}
Now the sign of $R(\pi/r^{-})$ is obtained by replacing $h$ by $(m+r)/r^{-}$
in (\ref{eq:finalest}), and thus by setting $j=0$ in (\ref{eq:signpioverr}).
By doing so we conclude that the sign of $R(\pi/r^{-})$ is $(-1)^{p+1}=(-1)^{\lfloor\frac{m}{r}\rfloor+1}$.
The proof is complete. 
\end{proof}
With Proposition \ref{prop:signchangegeneral} at our disposal, we
now put the finishing touches on the proof of Theorem \ref{maintheorem}.
Let $n,r\in\mathbb{N}$ such that $\max\{r,n\}>1$, and suppose that
\[
\sum_{m=0}^{\infty}P_{m}(z)t^{m}=\frac{1}{(1-t)^{n}+zt^{r}}=:\frac{1}{D_{n,r}(t,z)}.
\]
Given $m\in\mathbb{N}$, every zero of $R_{m}(\theta)$ (see (\ref{eq:Ptheta}))
corresponds to a distinct zero $z\in I$ of $P_{m}(z)$. Proposition
\ref{prop:signchangegeneral}, together with the Intermediate Value
Theorem imply that for $m\gg1$, $R_{m}(\theta)$ has at least $\left\lfloor m/r\right\rfloor $
zeros on $(0,\pi/r)$. The conclusions of Theorem \ref{maintheorem}
now follow from degree considerations, along with the density of the
solutions of the equations $\cos(m+r)\theta=\pm1$, $m=0,1,\ldots$,
in the interval $(0,\pi/r)$.

\section{Some open problems}

\label{open} In light of the conclusions of Theorem $\ref{maintheorem}$
and Proposition $\ref{illustration}$, it is natural to ask whether
the zeros of $P_{m}(z)$ lie on $I$ for all $m$. 
\begin{problem}
\label{prob:1} Let $n,r\in\mathbb{N}$ such that $\max\{n,r\}>1$.
We consider the sequence of polynomials $P_{m}(z)$ generated by 
\[
\sum_{m=0}^{\infty}P_{m}(z)t^{m}=\frac{1}{(1-t)^{n}+zt^{r}},
\]
and write $\mathcal{Z}(P_{m})$ for the set of zeros of $P_{m}(z)$.
Show that $\mathcal{Z}(P_{m})\subset I$ for all $m\ge0$, and ${\displaystyle \bigcup_{m=0}^{\infty}\mathcal{Z}(P_{m})}$
is dense in $I$ where 
\[
I=\left\{ \begin{array}{cc}
(0,\infty) & \text{if} n,r\ge2\\
(0,n^{n}/(n-1)^{n-1}) & \text{if }r=1\\
((r-1)^{r-1}/r^{r},\infty) & \text{ if }n=1
\end{array}\right. .
\]

\end{problem}
A natural way to extend the problem is to consider, in place of the
binomial expression $(1-t)^{n}$, \foreignlanguage{english}{\textit{any}
polynomial with real positive zeros (such polynomials also play a
key role in the theory of multiplier sequences). We formalize this
extension in }
\begin{problem}
Let $Q(t)\in\mathbb{R}[t]$ be a real polynomial in $t$ whose zeros
are positive real numbers and $Q(0)>0$. Show that for any integer
$r\ge0$, the zeros of the polynomial $P_{m}(z)$ generated by 
\[
\sum_{m=0}^{\infty}P_{m}(z)t^{m}=\frac{1}{Q(t)+zt^{r}}
\]
lie on the positive real ray. 
\end{problem}
As we have seen in the proof of Lemma $\ref{degree}$, the denominator
of the generating function gives the recurrence relation for $P_{m}(z)$
whereas the numerator gives rise to the set of initial polynomials.
Allowing for more general numerators in the rational generating function
is a natural extension of the current work. With some numerical evidence
in support, we propose the following 
\begin{problem}
Let $Q(t)\in\mathbb{R}[t]$ be a real polynomial in $t$ whose zeros
are positive real numbers and $Q(0)>0$. For any integer $r\ge0$
and any real bivariate polynomial $N(t,z)\in\mathbb{R}[t,z]$ whose
degree in $t$ is less than the degree in $t$ of $Q(t)+zt^{r}$,
we consider the sequence of polynomials $P_{m}(z)$ generated by 
\[
\sum_{m=0}^{\infty}P_{m}(z)t^{m}=\frac{N(t,z)}{Q(t)+zt^{r}}.
\]
Show that there is a fixed constant (depending perhaps on $r$ and
$\deg Q$) $C$ such that for any $m$, the number of zeros of $P_{m}(z)$
outside $(0,\infty)$ is less than $C$. 
\end{problem}
Finally, although in this paper we do not study them explicitly, transformations
$T$ with the property that the sequence of polynomials $\left\{ P_{m}^{T}(z)\right\} _{m=0}^{\infty}$
generated by $T[G(t,z)]$ have only real zeros whenever those generated
by $G(t,z)$ do are of great interest. We see a natural parallel between
these operators, and reality preserving linear operators on $\mathbb{R}[x]$,
and believe that understanding them is key to understanding how polynomial
sequences with only real zeros might be generated. As such, we pose 
\begin{problem}
Classify operators (bi-linear, or otherwise) $T$ on $\mathbb{R}(t,z)$ with
the property that all terms of the sequence of polynomials $\left\{ P_{m}^{T}(z)\right\} _{m=0}^{\infty}$
generated by $T[G(t,z)]$ have only real zeros whenever those generated
by $G(t,z)$ do. Any results regarding this problem would be pioneering,
even if $G(t,z)$ is restricted to be a rational function of the type discussed in the present paper. \end{problem}

\end{document}